\let\mathcal\mathscr
\numberwithin{equation}{section}
\newtheorem{theorem}{Theorem}[section] 
\newtheorem{lemma}[theorem]{Lemma}
\theoremstyle{definition}
\newtheorem*{acknowledgements}{Acknowledgements}
\newcommand{\eul}{\mathrm{e}}
\renewcommand{\phi}{\varphi}
\newcommand{\ZZ}{\mathbb{Z}}
\newcommand{\ZZp}{\mathbb{Z}_{\mathrm{prim}}}
\newcommand{\NN}{\mathbb{N}}
\newcommand{\QQ}{\mathbb{Q}}
\newcommand{\RR}{\mathbb{R}}
\newcommand{\CC}{\mathbb{C}}
\newcommand{\cM}{\mathcal{M}}
\newcommand{\cR}{\mathcal{R}}
\newcommand{\cP}{\mathcal{P}}
\renewcommand{\leq}{\leqslant}
\renewcommand{\le}{\leqslant}
\renewcommand{\geq}{\geqslant}
\renewcommand{\bar}{\overline}
\newcommand{\x}{\mathbf{x}}
\newcommand{\y}{\mathbf{y}}
\renewcommand{\c}{\mathbf{c}}
\renewcommand{\b}{\mathbf{b}}
\renewcommand{\r}{\mathbf{r}}
\newcommand{\fo}{\mathfrak{o}}
\newcommand{\fa}{\mathfrak{a}}
\newcommand{\fb}{\mathfrak{b}}
\newcommand{\fc}{\mathfrak{c}}
\newcommand{\fd}{\mathfrak{d}}
\newcommand{\fe}{\mathfrak{e}}
\newcommand{\fp}{\mathfrak{p}}
\newcommand{\fq}{\mathfrak{q}}
\newcommand{\ve}{\varepsilon}
\DeclareMathOperator{\vol}{vol}
\DeclareMathOperator{\res}{Res}
\DeclareMathOperator{\n}{N}
\DeclareMathOperator{\moo}{mod} 
\renewcommand{\bmod}[1]{\,(\moo{#1})}
\newcommand{\Zp}{\mathbb{Z}_{\text{prim}}}
\DeclareSymbolFont{bbold}{U}{bbold}{m}{n}
\DeclareSymbolFontAlphabet{\mathbbold}{bbold}
\newcommand{\Q}{\mathbb{Q}}
\newcommand{\N}{\mathbb{N}}
\newcommand{\R}{\mathbb{R}}
\newcommand{\Z}{\mathbb{Z}}
\renewcommand{\l}{\left}
\renewcommand{\r}{\right}
\renewcommand{\b}{\mathbf}
\renewcommand{\c}{\mathcal}
\renewcommand{\epsilon}{\varepsilon}
\renewcommand{\leq}{\leqslant}
\renewcommand{\geq}{\geqslant}
\renewcommand{\#}{\sharp}
\title[Arithmetic functions over principal ideals]{Averages of arithmetic functions\\ over principal ideals}
\author{T. D. Browning}
\address{School of Mathematics\\
University of Bristol\\ Bristol\\ BS8 1TW\\ UK}
\email{t.d.browning@bristol.ac.uk}
\author{E. Sofos}
\address{Max Planck Institute for Mathematics \\ 
Bonn \\ 
Vivatgasse 7\\
53111 \\ 
Germany} 
\email{sofos@mpim-bonn.mpg.de}
\subjclass[2010]{11N37 (11A25, 11N56)}
\date{\today}
\begin{document}

\begin{abstract}
For a general class of non-negative  functions
defined on integral ideals of number fields,
upper bounds are established for their average over the values of certain principal ideals that are  associated to irreducible binary forms with integer coefficients. 
\end{abstract}

\maketitle

\setcounter{tocdepth}{1}
\tableofcontents

\section{Introduction}\label{s:intro}

The study of averages of non-negative multiplicative arithmetic functions $f$ over the values of polynomials has a long and venerable history in number theory. 
From the point of view of upper bounds, this topic goes back to  work of Nair \cite{Nair}, which  has since been substantially generalised by 
 Nair--Tenenbaum \cite{NT}  and Henriot \cite{KH}.
For suitable expanding regions $\mathcal{B}\subset \ZZ^2$,
several authors have  extended these results to cover averages of the shape
$$
\sum_{(s,t)\in \mathcal{B}} f(|F(s,t)|),
$$
where $F\in \ZZ[s,t]$ is an  irreducible  binary  form.
This is the object of work 
by la Bret\`eche--Browning \cite{nair} and 
la Bret\`eche--Tenenbaum \cite{moyennes},
 for example.  These estimates have since had many applications to a range of problems, most notably in the quantitative arithmetic of Ch\^atelet surfaces \cite{annals}. 

Assuming for the moment that $F(x,1)$ is monic and irreducible, any root $\theta$ of the polynomial generates a number field $K=\QQ(\theta)$ whose degree is equal to the degree of $F$. 
In this paper we instead consider a variant in which 
we  take a general non-negative multiplicative function $f$ defined on the ideals of $K$, and ask to bound the size of the sum
$$
\sum_{(s,t) \in \mathcal{B}} f(s-\theta t),
$$
where 
we view $(s-\theta t)$ as an ideal in the ring of integers $\fo_K$ of $K$.
Our primary motivation for considering this sum is the crucial role that it plays in 
work of the authors \cite{dreamteam} on Manin's conjecture for smooth quartic del Pezzo surfaces.

In order to present our main result we require some notation and definitions.  
Let $K/\QQ$ be a number field with
ring of integers $\fo_K$.
Denote by $\c{I}_K$ the set of ideals in $\fo_K$. 
We say that a function $f:\c{I}_K\to \RR_{\geq 0}$ 
is {\em pseudomultiplicative} if 
there exist strictly positive constants $A,B,\ve$ such that   
$$
f(\fa \fb)
\leq 
f(\fa)
\min\left\{A^{\Omega_K(\fb)}, B(\n_K\fb)^\ve\right\},
$$
for all coprime ideals $\fa,\fb\in \c{I}_K$, where
$
\Omega_{K}(\fb)=\sum_{ \fp \mid  \fb}
\nu_\fp(\fb)
$
and $\n_K \fb=\#\fo_K/\fb$ is the ideal norm.
We denote the class of all
pseudomultiplicative functions 
associated to $A,B$ and $\ve$   
by  $\cM_K=\cM_K(A,B,\ve)$. 
When $K=\QQ$, this class contains the class of  
submultiplicative functions that arose in pioneering work of Shiu \cite{shiu}.
Note that any $f\in \c{M}_K$ satisfies 
$f(\fa) \ll A^{\Omega_K(\fa)}$ and  
$f(\fa)  \ll  (\n_K \fa)^\varepsilon$,  
for any  $\fa\in \c{I}_K$,
where the second implied constant depends on $B$.

We will need to work with functions supported away from  ideals of small norm.  To facilitate this, for any ideal $\fa\in \c{I}_K$ and $W\in \NN$, we set
$$
\fa_W=\prod_{\substack{\fp^\nu \| \fa \\ \gcd(\n_K \fp,W)=1}}
\fp^\nu.
$$
We extend this to rational integers in the obvious way.
Next,  for any $f\in \c{M}_K$, we define 
$
f_W(\fa)=f(\fa_W).
$
We will always assume that $W$ is of the form
\begin{equation}\label{eq:WW}
W=\prod_{p\leq w} p,
\end{equation}
for some $w>0$. Thus $\gcd(\n_K\fp,W)=1$ if and only if $p>w$, if $\n_K\fp=p^{f_\fp}$ for some $f_\fp\in \NN$.
Let
\begin{equation}\label{eq:span}
\!\cP_K^\circ\!=\!
\left\{\fa\subset \fo_K: 
\fp\mid \fa \Rightarrow f_{\fp}=1
\right\}
\end{equation}
be the
multiplicative span of all prime ideals $\fp\subset \fo_K$ with residue degree $f_\fp=1$. 
For any  $x>0$
and  $f\in \c{M}_K$ we 
 set  
$$
E_{f}(x;W)=\exp\Bigg(
\sum_{\substack{
\fp\in \cP_K^\circ \text{ prime}\\  
w<\n_K 
\fp \leq x
\\
f_\fp=1 }}\frac{f(\fp)}{\n_K\fp}\Bigg),
$$
if $f$ is  submultiplicative, and 
$$
E_{f}(x;W)=
\sum_{\substack{
\n_K\fa  \leq x\\
\fa\in \cP_K^\circ  \text{ square-free} \\
\gcd(\n_K \fa,W)=1
}}\frac{f(\fa)}{\n_K\fa},
$$
otherwise.

Suppose now that we are given irreducible binary forms $F_1,\dots,F_N\in \ZZ[x,y]$, which we assume to be pairwise coprime. 
Let $i\in \{1,\dots,N\}$. Suppose that $F_i$ has degree $d_i$
and that it is not proportional to $y$, so that  $b_i=F_i(1,0)$ is a non-zero integer.
It will be convenient to form the homogeneous polynomial
\begin{equation}\label{eq:burger}
\tilde F_i(x,y)=b_i^{d_i-1}F(b_i^{-1}x,y).
\end{equation}
This has integer coefficients and satisfies $\tilde F_i(1,0)=1$.
We  let $\theta_i$ be a root of the monic polynomial $\tilde F(x,1)$.
Then $\theta_i$ is an algebraic integer and we denote the  associated number field of degree $d_i$ by 
   $K_i=\QQ(\theta_i)$.
Moreover,  
\begin{equation}\label{eq:rancid}
N_{K_i/\QQ}(b_is-\theta_it)=\tilde F_i(b_is,t)=b_i^{d_i-1}F_i(s,t),
\end{equation}
for any $(s,t)\in \ZZ^2$.
(If  $b_i=0$, so that  $F_i(x,y)=c y$ for some non-zero $c \in \ZZ$,
we take  $\theta_i=-c$ and $K_i=\Q$ in this discussion.)
Our main result is a tight upper bound for averages of 
$f_{1,W}((b_1s-\theta_1 t))\dots f_{N,W}((b_Ns-\theta_N t)),$
over primitive vectors  $(s,t)\in \ZZ^2$, 
for general pseudomultiplicative functions $f_i\in \c{M}_{K_i}$ and suitably large values of $w$. 

Next, for any $k\in \NN$ and any polynomial $P\in \ZZ[x]$, we set 
$$
\rho_{P}(k)=\#\{x\bmod{k} : P(x)\equiv 0 \bmod{k}\}.
$$
We put 
\begin{equation}\label{eq:brain}
\bar\rho_i(k)=
\begin{cases}
\rho_{F_i(x,1)}(k) & \text{ if $F_i(1,0)\neq 0$,}\\
1 & \text{ if $F_i(1,0)= 0$},
\end{cases}
\end{equation}
and
\begin{equation}\label{eq:faggot}
h^*(k)=\prod_{p\mid k} \left(1-\frac{\bar\rho_1(p)+\dots +\bar\rho_N(p)}{p+1}\right)^{-1}.
\end{equation} 
To  any non-empty  bounded measurable  region
$\c{R} \subset \R^2$, 
we associate
\begin{equation}\label{eq:KR}
K_\c{R}=1+
\|\c{R}\|_{\infty}
+\partial(\c{R})
\log(1+\|\c{R}\|_\infty)
+\frac{\mathrm{vol}(\c{R})}{1+\|\c{R}\|_\infty}
,\end{equation}
where $\|\c{R}\|_\infty= \sup_{(x,y) \in \c{R}}\{ |x|,|y|\}$.
We say that such a region $\c{R}$ is {\em regular} if 
its boundary  is piecewise differentiable,  $\c{R}$ contains no 
zeros of 
$F_1\cdots F_N$  and 
 there exists
$c_1>0$ such that 
$\mathrm{vol}(\c{R})\geq K_\c{R}^{c_1}$.
Note that we then have 
$$
\mathrm{vol}(\c{R})\leq 4\|\c{R}\|_\infty^2\leq (1+\|\c{R}\|_\infty)^4\leq 
K_\c{R}^{4}.
$$
Bearing all of this in mind, we may now record our main result.

\begin{theorem}\label{t:NT}
Let $\c{R} \subset \R^2$ be a regular region,
let    $V=\mathrm{vol}(\c{R})$ 
and let $G \subset \Z^2$ be a lattice of full rank, with  determinant $q_G$ and  first successive minimum  $\lambda_G$.
Assume that  $q_G \leq V^{c_2}$ for some  $c_2>0$.
Let  $f_i \in \c{M}_{K_i}(A_i,B_i,\ve_i)$, for $1\leq i\leq N$ and let 
$$\ve_0= \max\bigg\{1+\frac{4}{c_1},\frac{4(5+3
\max\{\ve_1,\dots,\ve_N\})
}{c_1}\bigg\}
\Big(\sum_{i=1}^N d_i \ve_i\Big)
.$$ 
Then, for any  $\ve>0$ and 
$w>w_0(f_i,F_i,N)$, we have 
\begin{align*}
\sum_{(s,t) \in \Zp^2 \cap \c{R}\cap G}
\prod_{i=1}^N
f_{i,q_GW}((b_i s -\theta_i t))
\hspace{-0.08cm}
\ll~&
\frac{V}{(\log V)^N}
\frac{h_W^*(q_G)
}{q_G}
\prod_{i=1}^N
E_{f_i}(V;W)\\
&+\frac{K_{\c{R}}^{1+\ve_0+\ve}}{\lambda_G} 
,\end{align*}
where the implied constant depends at most on
$c_1,c_2,A_i,B_i,F_i, \ve, 
\ve_i, N,W$.
\end{theorem}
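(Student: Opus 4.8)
\emph{Strategy of proof.} The starting point is the norm identity \eqref{eq:rancid}: the ideal $(b_is-\theta_it)\subseteq\fo_{K_i}$ has norm $|b_i^{d_i-1}F_i(s,t)|$, so knowing $f_i((b_is-\theta_it))$ amounts to knowing which prime ideals of $\fo_{K_i}$ divide $(b_is-\theta_it)$, and to which power. A degree-one prime $\fp\mid p$ divides $(b_is-\theta_it)$ exactly when $s:t$ lies in one of the residue classes modulo $p$ cut out by the roots of $\tilde F_i(x,1)\bmod p$ -- generically $\bar\rho_i(p)$ of them by \eqref{eq:brain} -- whereas ramified primes, residue-degree-$\geq2$ primes, and higher prime powers impose far sparser or far stronger conditions. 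Choosing $w_0$ larger than every prime that ramifies in some $K_i$, divides some $b_i$ or a resultant $\res(F_i,F_j)$, or at which $\ZZ[\theta_i]\neq\fo_{K_i}$, all exceptional primes are $\leq w$ and so disappear on passing to $f_{i,q_GW}$; what survives is this generic picture, together with the primes dividing $q_G$, whose local analysis will produce the factor $h_W^*(q_G)/q_G$ (with $h_W^*(k)$ the product in \eqref{eq:faggot} over $p\mid k$ with $p>w$). With this dictionary in place, the plan is to run the Nair--Tenenbaum/Henriot method of \cite{KH,moyennes,nair} in ideal-theoretic form: (i) strip off the small and exceptional prime part of each $f_i$; (ii) open the remaining, essentially square-free, part of $f_i$ as a short Dirichlet convolution over ideals $\fd_i\in\cP^\circ_{K_i}$; (iii) turn each divisibility $\fd_i\mid(b_is-\theta_it)$ into a lattice condition and count lattice points in $\c{R}\cap G$; (iv) reassemble, isolating the main term and bounding the accumulated error.

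\emph{Step 1: reduction to a short square-free divisor sum.} For each $i$, write a generic ideal as $\fa=\fr\fe$, where $\fr\in\cP^\circ_{K_i}$ is the product of the $\fp^{1}$ over unramified degree-one primes $\fp$ of norm $\leq V$, coprime to $q_GW$, with $v_\fp(\fa)=1$, and $\fe$ absorbs all remaining prime-power parts. Since $\gcd(\fr,\fe)=\fo_{K_i}$, pseudomultiplicativity gives
\[
f_{i,q_GW}(\fa)\leq f_i(\fr)\,\min\bigl\{A_i^{\Omega_{K_i}(\fe_{q_GW})},\ B_i(\n_{K_i}\fe_{q_GW})^{\ve_i}\bigr\}.
\]
On the support of our sum $\n_{K_i}(b_is-\theta_it)=|b_i^{d_i-1}F_i(s,t)|\ll_{F_i}\|\c{R}\|_\infty^{d_i}$, so the exceptional factor $\fe$ of $(b_is-\theta_it)$ is either $O(1)$ or confined to a sparse set of $(s,t)$ (those for which $\prod_iF_i(s,t)$ carries a large powerful, higher-residue-degree, or $>V$-prime part); bounding by $A_i^\Omega$ on the first set and by $(\n_{K_i}\fe)^{\ve_i}$ on the second, as in \cite{moyennes}, reduces matters to $\sum_{(s,t)\in\Zp^2\cap\c{R}\cap G}\prod_if_i(\fr_i)$, where $\fr_i=\fr_i(s,t)$ is the good square-free part of $(b_is-\theta_it)$. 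Writing $f_i=\mathbf{1}*g_i$ on $\cP^\circ_{K_i}$ (so $g_i(\fp)=f_i(\fp)-1$, with $|g_i(\fp)|\ll1$ if $f_i$ is submultiplicative and $|g_i(\fp)|\ll(\n_{K_i}\fp)^{\ve_i}$ in general), we have $f_i(\fr_i)=\sum_{\fd_i\mid\fr_i}g_i(\fd_i)$; truncating at $\n_{K_i}\fd_i\leq D$ by Rankin's trick leaves a tail $\ll D^{-\eta}V^{1+O_{c_1}(\sum_id_i(\ve_i+\eta))+o(1)}$, which is admissible once $D$ is a suitable fixed power of $V$ (a power of $\log V$ suffices when all $f_i$ are submultiplicative). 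It remains to bound
\[
\sum_{\substack{\fd_i\in\cP^\circ_{K_i}\ \text{square-free},\ \n_{K_i}\fd_i\leq D\\ \gcd(\n_{K_i}\fd_i,\,q_GW)=1}}\Bigl(\prod_{i=1}^Ng_i(\fd_i)\Bigr)\,\#\bigl\{(s,t)\in\Zp^2\cap\c{R}\cap G:\ \fd_i\mid(b_is-\theta_it)\ \text{for all }i\bigr\}.
\]

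\emph{Steps 2 and 3: lattice points and the main term.} For square-free $\fd_i\in\cP^\circ_{K_i}$ coprime to $q_GW$, the condition $\fd_i\mid(b_is-\theta_it)$ confines $(s,t)$ to a sublattice of $\ZZ^2$ of index $\n_{K_i}\fd_i$; summed over $\fd_i$ and intersected with $\Zp^2$ this recovers the root counts $\bar\rho_i$ of \eqref{eq:brain} (those $\fd_i$ carrying two prime ideals over one rational prime contributing nothing to the primitive sum), and the conditions for distinct $i$ are compatible with a primitive $(s,t)$ only when the $\n_{K_i}\fd_i$ are pairwise coprime away from the sieved primes, since the resultants $\res(F_i,F_j)$ have been sieved out. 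Intersecting further with $G$ and with the Möbius expansion of the primitivity condition produces lattices $\Lambda$ of determinant $q_G\prod_i\n_{K_i}\fd_i$ (up to squares of divisors of $q_GW$) with $\lambda_1(\Lambda)\gg\lambda_G$, and the classical estimate
\[
\#(\c{R}\cap\Lambda)=\frac{\vol(\c{R})}{\det\Lambda}+O\!\left(\frac{\partial(\c{R})}{\lambda_1(\Lambda)}+1\right)
\]
applies. Feeding the main terms $\vol(\c{R})/\det\Lambda$ back in, summing over the cosets and the primitivity sum, and completing the $\fd_i$-sum (the completion error absorbed by Step 1), everything factors as
\[
\frac{V}{q_G}\Bigl(\prod_{p\mid q_G}\ell_p\Bigr)\prod_{i=1}^N\ \sum_{\substack{\fd_i\in\cP^\circ_{K_i}\ \text{sq-free}\\ \n_{K_i}\fd_i\leq D}}\frac{g_i(\fd_i)\,\bar\rho_i(\n_{K_i}\fd_i)}{\n_{K_i}\fd_i}\Bigl(1+O\bigl((\n_{K_i}\fd_i)^{-1}\bigr)\Bigr),
\]
the cross-terms between distinct $i$ and the $O(\cdot)$ corrections contributing only a bounded Euler product. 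By the prime ideal theorem $\sum_{w<\n_{K_i}\fp\leq V,\,f_\fp=1}(\n_{K_i}\fp)^{-1}=\log\log V+O(1)$, so the constant function divided out on passing from $f_i$ to $g_i$ costs exactly a factor $\asymp\log V$ and each inner sum is $\asymp E_{f_i}(V;W)/\log V$ -- by Mertens' theorem for $K_i$ in the submultiplicative case, and by direct comparison with the truncated square-free sum defining $E_{f_i}(V;W)$ in general (which is why the good part was localized to norms $\leq V$). Finally $\prod_{p\mid q_G}\ell_p\leq h_W^*(q_G)$ uniformly over lattices of determinant $q_G$ (the $(p+1)$ in each local density coming from primitivity, and only $p\mid q_G$ surviving because $p\leq w$ has been sieved), so assembling gives the first term $\dfrac{V}{(\log V)^N}\dfrac{h_W^*(q_G)}{q_G}\prod_iE_{f_i}(V;W)$.

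\emph{The main obstacle: the error term.} The delicate point is to show the total error is $\ll K_\c{R}^{1+\ve_0+\ve}/\lambda_G$. Summing the $O(\partial(\c{R})/\lambda_G+1)$ of Step 2, weighted by $\prod_i|g_i(\fd_i)|$ and by the $\prod_i\bar\rho_i(\n_{K_i}\fd_i)$ cosets, over $\prod_i\n_{K_i}\fd_i\leq D$, and using $\bar\rho_i(k)\leq d_i^{\,\omega(k)}$ and $|g_i(\fd_i)|\ll(\n_{K_i}\fd_i)^{\ve_i}$, gives a bound of the shape $\ll D^{\,1+\ve+O(\sum_i\ve_i)}(\partial(\c{R})/\lambda_G+1)$. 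One therefore needs to choose $D$ simultaneously (a) a small enough power of $K_\c{R}$ for this to fit inside $K_\c{R}^{1+\ve_0+\ve}/\lambda_G$ -- using $\partial(\c{R})\leq K_\c{R}$ and $\lambda_G\ll\sqrt{q_G}\ll V^{c_2/2}\ll K_\c{R}^{O(1)}$, the last step via $V\leq K_\c{R}^4$ -- and (b) a large enough power of $V$ for the Rankin truncation error of Step 1, the completion error of Step 3, and the contribution of the $>V$-prime part of Step 1 all to be admissible, the relevant exponents there being controlled by $\sum_id_i(\ve_i+\eta)$ via $\n_{K_i}(b_is-\theta_it)\ll\|\c{R}\|_\infty^{d_i}$ and $\bar\rho_i\leq d_i^{\,\omega}$. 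Reconciling (a) and (b) is possible precisely because $\c{R}$ is regular, i.e. $K_\c{R}^{c_1}\leq V\leq K_\c{R}^4$; carrying the two families of exponents through this reconciliation -- the degrees $d_i$, the $\ve_i$, and the constants $4/c_1$ and the bound $V\leq K_\c{R}^4$ -- is exactly what pins down the value of $\ve_0$ recorded in the statement, the hypothesis $q_G\leq V^{c_2}$ being used only to keep the completion of the $\fd_i$-sum and the choice of $D$ legitimate (it enters the implied constant). I expect this two-sided balancing, rather than any individual estimate, to be the crux of the argument.
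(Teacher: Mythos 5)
Your plan is the Nair--Tenenbaum convolution method, which is not the route the paper takes, and as written it has two genuine gaps. The first is structural: in Step 1 you write $f_i=\mathbf{1}*g_i$ on square-free ideals, i.e.\ $f_i(\fr_i)=\sum_{\fd_i\mid\fr_i}g_i(\fd_i)$ with $g_i$ multiplicative, but the theorem is stated for the class $\c{M}_{K_i}$ of \emph{pseudomultiplicative} functions, which only satisfy the one-sided inequality $f(\fa\fb)\leq f(\fa)\min\{A^{\Omega_K(\fb)},B(\n_K\fb)^{\ve}\}$ for coprime $\fa,\fb$ and need not be multiplicative at all; no such convolution identity exists for them, and defining $g_i$ by M\"obius inversion instead destroys both the multiplicativity and the size control you rely on. The second gap is the heart of the matter even for genuinely multiplicative $f_i$: the $(\log V)^{-N}$ saving in your argument is supposed to come from the claim that the truncated signed sum $\sum_{\n_i\fd_i\leq D}g_i(\fd_i)\bar\rho_i(\n_i\fd_i)/\n_i\fd_i$ is $\asymp E_{f_i}(V;W)/\log V$, which you assert via ``Mertens'' and ``direct comparison''. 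Since $f_i(\fp)$ may vanish or be as large as $A_i$ or $(\n_i\fp)^{\ve_i}$, the weights $g_i(\fp)=f_i(\fp)-1$ change sign wildly; a truncated sum of a signed multiplicative function cannot in general be compared with its Euler product, and--crucially for an \emph{upper} bound--none of your intermediate manipulations (truncation, completion, dropping conditions) preserve inequalities once the summands are signed. Bounding instead by $\prod_{\fp}(1+|g_i(\fp)|/\n_i\fp)$ loses the log-saving entirely (already when $f_i(\fp)\equiv 0$ this product is $\asymp\log D$ while the target is $\asymp 1/\log V$).

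The paper avoids both issues by following Shiu rather than Nair--Tenenbaum: for each $(s,t)$ it factors $F(s,t)_{q_GW}=a_{s,t}b_{s,t}$ with $a_{s,t}\leq z=V^{\omega}$ the small-prime part, uses only the pseudomultiplicative inequality $f_{i,q_GW}((b_is-\theta_it))\leq f_i(\fa^{(i)}_{s,t})A_i^{\Omega_i(\fb^{(i)}_{s,t})}$, and splits into four cases according to $P^-(b_{s,t})$ and the size of $a_{s,t}$. The $(\log V)^{-N}$ factor is produced by the fundamental lemma of sieve theory (Lemma \ref{lem:salami}), applied to the condition that the cofactor has no prime factor below $z^{\eta/2}$, combined with the lattice point count of Lemma \ref{lem:5.3} and the congruence interpretation of $\fa\mid(b_is-\theta_it)$ from Lemmas \ref{lem:reuss} and \ref{lem:honey}; the exceptional $(s,t)$ of your ``sparse set'' remark are exactly Cases II--IV, which require Shiu's smooth-number estimate and a separate Rankin-type argument (Lemma \ref{lem:foul mouth}) to control the contribution where $a_{s,t}>z^{1-\eta}$, together with Lemma \ref{lem:knut} to pass to the square-free sums defining $E_{f_i}(V;W)$. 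By contrast, the exponent bookkeeping that you single out as the crux (reconciling your parameters $D$ and the regularity bounds $K_{\c{R}}^{c_1}\leq V\leq K_{\c{R}}^4$, which is where $\ve_0$ comes from) is the routine part of the paper's proof; the substance lies in the sieve lemma and in Cases II--IV, which your proposal does not supply.
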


We
may compare this estimate with the principal result in work of la Bret\`eche--Tenenbaum \cite{moyennes}. Take $G=\ZZ^2$ and $d_1=\dots=d_N=1$. Then $b_is-\theta_it=F_i(s,t)$, 
for $1\leq i\leq N$. In this setting 
Theorem \ref{t:NT} can be deduced from the special case of \cite[Thm.~1.1]{moyennes}, in which all of the binary  forms are linear.

\begin{acknowledgements}
While working on this paper the first author was
supported by ERC grant \texttt{306457}.  
\end{acknowledgements}

\section{Technical results}

\subsection{Lattice point counting}

We will need general results about counting lattice points in an expanding region. 
Let $\mathbf{A}\in \mathrm{Mat}_{2\times 2}(\ZZ)$ be a non-singular upper triangular matrix and consider the lattice given by $G=\{\mathbf{Ay} : \y\in \ZZ^2\}$.  
Recall that  $G$ is said  {\em primitive} if the only integers $m$ fulfilling $G\subset m\ZZ^2$ are $m=\pm 1$.
We denote its {\em determinant} and first  {\em successive minimum}  by $\det(G)$ and $\lambda_G$, respectively.    Assume that $\cR\subset \RR^2$ is a regular region, in the sense of Theorem \ref{t:NT}.
Then, for any 
$\x_0\in \ZZ^2$ and $q\in \NN$ such that $\gcd(\det(G)\x_0,q)=1$, we  will require an asymptotic estimate for the counting function
$$
N(\cR)=\#\{\x\in \ZZp^2\cap \cR\cap G: \x\equiv \x_0\bmod{q}\}.
$$
The following estimate follows from work of Sofos \cite[Lemma~5.3]{sofos}.

\begin{lemma}\label{lem:5.3}
Assume that $G $ is primitive. Then 
\begin{align*}
N(\cR)=~&
\frac{\mathrm{vol}(\c{R})}{\zeta(2) \det(G) q^2}
\prod_{p \mid \det(G)} 
\l(1+\frac{1}{p}\r)^{-1}
\prod_{p \mid  q} 
\l(1-\frac{1}{p^2}\r)^{-1}\\
&+O\left(\frac{\tau(\det(G))K_\cR}{\lambda_G}
\right),
\end{align*}
where $K_\cR$ is given by \eqref{eq:KR} and the implied constant is absolute.
\end{lemma}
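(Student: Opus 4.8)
The plan is to reduce the count $N(\cR)$ to a lattice-point count over a single sublattice, and then invoke the quoted result \cite[Lemma~5.3]{sofos}. First I would dispose of the primitivity coprimality hypothesis: the congruence condition $\x \equiv \x_0 \bmod q$ with $\gcd(\det(G)\x_0, q) = 1$ forces $\x$ to be primitive modulo every prime dividing $q$, so the effect of intersecting with $\Zp^2$ is only felt at primes $p \nmid q$. I would therefore write the indicator of primitivity via Möbius inversion, $\mathbf{1}_{\x \in \Zp^2} = \sum_{e \geq 1, \, e \mid \x} \mu(e)$, noting that any such $e$ is automatically coprime to $q$. This turns $N(\cR)$ into $\sum_{e \geq 1, \gcd(e,q)=1} \mu(e) \, \#\{\x \in \cR \cap G : \x \equiv \x_0 \bmod q, \ e \mid \x\}$.

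Next I would recognise the inner count as a lattice-point count in the dilated region $\cR$ for the lattice $G_e = \{ \x \in G : \x \equiv \x_0 \bmod q, \ e \mid \x \}$ (a coset of a finite-index sublattice of $G$, non-empty precisely by the coprimality of $e$, $q$ and $\det(G)$ with the relevant data), whose covolume is $eq^2 \det(G) / (\text{local correction factors})$ — one reads off the index of the simultaneous congruence conditions $\x \equiv 0 \bmod e$ and $\x \equiv \x_0 \bmod q$ inside $G$, which by CRT and the coprimality hypotheses is $e^2 q^2 / \gcd$-type adjustments at primes dividing $\det(G)$. The main term of each inner count is $\vol(\cR)/\mathrm{covol}(G_e)$ and the error term is controlled by the first successive minimum of $G_e$, which dominates $\lambda_G$; the quoted lemma of Sofos supplies exactly an estimate of this shape with error $O(\tau(\det G) K_\cR / \lambda_G)$ uniformly. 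Summing the main terms over $e$ produces the Euler product: the sum $\sum_{\gcd(e,q\det G)=1} \mu(e)/e^2$ rebuilds $\prod_{p \nmid q \det G}(1 - p^{-2}) = \zeta(2)^{-1} \prod_{p \mid q}(1-p^{-2})^{-1} \prod_{p \mid \det G}(1-p^{-2})^{-1}$, and combining the last product with the local densities at $p \mid \det G$ already present in each inner count collapses to the stated factor $\prod_{p \mid \det G}(1 + 1/p)^{-1}$.

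The delicate point is the error term: naively summing the $O(\tau(\det G) K_\cR/\lambda_{G_e})$ over all $e$ diverges, so one cannot afford to extend the Möbius sum to infinity before estimating. The standard fix, which I would follow, is to truncate the Möbius sum at $e \leq \|\cR\|_\infty$ (beyond which the region contains no nonzero point of $e\ZZ^2 \cap G$, assuming $\cR$ is suitably bounded relative to $\|\cR\|_\infty$, which is part of regularity), so that only $O(\|\cR\|_\infty)$ terms survive and the tail of the main-term series contributes $O(\vol(\cR)/(\|\cR\|_\infty \det G))$, which is absorbed into $K_\cR/\lambda_G$ via \eqref{eq:KR}. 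For the surviving range one uses $\lambda_{G_e} \geq \lambda_G$ and $\sum_{e \leq \|\cR\|_\infty} \tau(\det G) \ll \tau(\det G)\|\cR\|_\infty$, again absorbed into the claimed error after noting $\|\cR\|_\infty \ll K_\cR$.

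The main obstacle I anticipate is precisely this bookkeeping of the error term across the truncated Möbius sum — ensuring the accumulated $O$-terms, together with the tail of the main series, all fit inside a single clean $O(\tau(\det G) K_\cR/\lambda_G)$ with an absolute implied constant, and checking that the coprimality hypothesis $\gcd(\det(G)\x_0, q) = 1$ is genuinely strong enough to keep every sublattice coset $G_e$ non-empty and to make all the local factors multiply out as stated. Everything else is a routine combination of Möbius inversion, the Chinese Remainder Theorem, and the cited lattice-point estimate.
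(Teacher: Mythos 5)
The paper does not actually prove this lemma: it is quoted verbatim as \cite[Lemma~5.3]{sofos}, so your proposal is really a reconstruction of Sofos's argument rather than a reduction to it. Your skeleton (M\"obius inversion over the content of $\x$, CRT to merge the congruence mod $q$ with the divisibility by $e$, and a Davenport-type count for each sublattice) is indeed the standard route, and your main-term bookkeeping is correct once one records that for a primitive lattice $G$ the index of $G\cap e\ZZ^2$ in $G$ is $e^2/\gcd(e,\det G)$, so that $\sum_{\gcd(e,q)=1}\mu(e)\gcd(e,\det G)/e^2$ produces exactly the stated Euler factors.

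The genuine gap is in the error term, precisely at the point you flag as delicate: your proposed fix does not work. Summing a per-$e$ error of size $O(\tau(\det G)K_\cR/\lambda_{G_e})$ over the $\asymp\|\cR\|_\infty$ surviving values of $e$, using only $\lambda_{G_e}\ge\lambda_G$ and then ``$\|\cR\|_\infty\ll K_\cR$'', yields $O(\tau(\det G)\,\|\cR\|_\infty K_\cR/\lambda_G)$, which exceeds the claimed bound by a factor of $\|\cR\|_\infty$; even the bare $O(1)$ per term accumulates to $O(\|\cR\|_\infty)$, which is not $O(K_\cR/\lambda_G)$ when $\lambda_G$ is large (and $\lambda_G$ may be as large as $\sqrt{\det G}$, with $\det G=q_G$ allowed to grow like a power of $V$). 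The missing ingredient is the refined lower bound
\[
\lambda_1\bigl(G\cap e\ZZ^2\bigr)\;\ge\;\frac{e\,\lambda_G}{\gcd(e,\det G)},
\]
which follows from $eG\subseteq G\cap e\ZZ^2$ with index $\gcd(e,\det G)$. With this, the perimeter contributions sum to $\ll(\partial(\cR)/\lambda_G)\sum_{e\le 2\|\cR\|_\infty}\gcd(e,\det G)/e\ll\tau(\det G)\,\partial(\cR)\log(1+\|\cR\|_\infty)/\lambda_G$, and the number of $e$ for which $G\cap e\ZZ^2$ meets $\cR$ nontrivially is $\ll\tau(\det G)\|\cR\|_\infty/\lambda_G$; both are absorbed by $\tau(\det G)K_\cR/\lambda_G$, and this is also where the divisor factor $\tau(\det G)$ in the statement actually comes from. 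Without some such input tying $\lambda_{G_e}$ to $e$ \emph{and} to $\lambda_G$ simultaneously, the argument as you have written it cannot recover the $1/\lambda_G$ saving.
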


\subsection{Restriction to square-free support}

For a given number field $K/\QQ$ of degree $d$ and given $f\in \c{M}_K$, 
 it will sometimes be useful to bound 
sums of the shape 
$$
\sum_{\n_K \fa\leq x} \frac{f(\fa)}{\n_K \fa},
$$
by a sum restricted to square-free integral ideals supported away from ideals of small norm. This is encapsulated in the following result.

\begin{lemma}
\label{lem:knut}
Let $f\in \c{M}_K(A,B,\ve)$.  
Assume that  $f^\dagger$ is multiplicative and that
there exists  $M>0$ such that 
\[
|f^\dagger(\fp^\nu)-1|\leq \frac{M}{ \n_K \fp},
\] 
for all  prime ideals $\fp$ and $\nu\in \N$.
Assume that $W$ is given by \eqref{eq:WW}, with  $w>2(A+M)$.
Then 
\[
\sum_{\n_K \fa\leq x }
\frac{f(\fa_W)f^\dagger(\fa_W)}{\n_K\fa}
\ll_{A,M,W} 
\sum_{\substack{
\n_K \fb\leq x \\ 
\fb  \text{ square-free}
\\
\gcd(\n_K \fb,W)=1}}
\frac{f(\fb)}{\n_K \fb}
.\]
If $f$ is submultiplicative then the right hand side can be replaced by 
\[
\exp\l(\sum_{\substack{w<\n_K\fp \leq x }}\frac{f(\fp)}{\n_K\fp}\r)
.\]
\end{lemma}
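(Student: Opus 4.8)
The plan is to reduce the sum over all integral ideals $\fa$ to a sum over square-free ideals supported away from small primes by peeling off, prime by prime, the contributions of higher prime powers and of primes dividing $W$. First I would invoke multiplicativity: writing each $\fa$ as a product of prime power components, the Dirichlet-type sum $\sum_{\n_K\fa\le x} f(\fa_W)f^\dagger(\fa_W)/\n_K\fa$ can be compared to the analogous sum with multiplicative structure made explicit. The presence of $\fa_W$ kills all primes $\fp$ with $\n_K\fp\le w$, so those primes contribute a bounded Euler factor depending only on $W$ (there are $O(1)$ of them, each contributing $\ll 1$ since $f(\fo_K)\le$ const and the norms are bounded below by $2$). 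This is where the constant $\ll_{A,M,W}$ picks up its $W$-dependence.

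Next, for the primes $\fp$ with $\n_K\fp>w$, I would use the pseudomultiplicativity bound $f(\fp^\nu)\le f(\fo_K)A^{\nu}$ together with the hypothesis $|f^\dagger(\fp^\nu)-1|\le M/\n_K\fp$, so $f^\dagger(\fp^\nu)\le 1+M/\n_K\fp \le 2$. Comparing the local factor $\sum_{\nu\ge 0} f(\fp^\nu)f^\dagger(\fp^\nu)/\n_K\fp^\nu$ at $\fp$ with $1+f(\fp)/\n_K\fp$ (the square-free local factor), the ratio is
\[
\frac{\sum_{\nu\ge 0} f(\fp^\nu)f^\dagger(\fp^\nu)\n_K\fp^{-\nu}}{1+f(\fp)\n_K\fp^{-1}}
\le 1+O\!\left(\frac{A+M}{\n_K\fp^2}\right)+O\!\left(\sum_{\nu\ge 2}\frac{A^\nu}{\n_K\fp^\nu}\right),
\]
and the tail geometric series converges because $w>2(A+M)$ forces $A/\n_K\fp<1/2$, giving $\sum_{\nu\ge 2}(A/\n_K\fp)^\nu \ll A^2/\n_K\fp^2$. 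Hence each local ratio is $1+O((A+M)^2/\n_K\fp^2)$, and the product over all $\fp$ with $\n_K\fp>w$ converges absolutely to a constant depending only on $A,M$ (using $\sum_\fp \n_K\fp^{-2}\le \sum_n d\, n^{-2}\ll 1$, with $d=[K:\QQ]$ absorbed into the $A,M$-dependence since $d$ is fixed). Multiplying out and re-expanding the resulting product as a sum over square-free ideals (each square-free $\fb$ with $\gcd(\n_K\fb,W)=1$ and $\n_K\fb\le x$ appearing with coefficient $f(\fb)/\n_K\fb$, and the cross terms only adding non-negative contributions) yields the first displayed bound.

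For the final assertion, when $f$ is submultiplicative one has $f(\fb)=f(\prod_{\fp\mid\fb}\fp)\le \prod_{\fp\mid\fb}f(\fp)$ for square-free $\fb$, so
\[
\sum_{\substack{\n_K\fb\le x\\ \fb\text{ sqfree},\ \gcd(\n_K\fb,W)=1}}\frac{f(\fb)}{\n_K\fb}
\le \prod_{w<\n_K\fp\le x}\left(1+\frac{f(\fp)}{\n_K\fp}\right)
\le \exp\!\left(\sum_{w<\n_K\fp\le x}\frac{f(\fp)}{\n_K\fp}\right),
\]
using $1+t\le\eul^t$; dropping the constraint $\fb\in\cP_K^\circ$ only enlarges the sum, which is harmless for an upper bound. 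The main obstacle is bookkeeping the uniformity: one must check carefully that the convergence of the correction product over $\fp$ with $\n_K\fp>w$ is genuinely uniform in $x$ and that all implied constants depend only on $A,M$ (and, for the finitely many small primes, on $W$), which is exactly what the hypothesis $w>2(A+M)$ is engineered to guarantee.
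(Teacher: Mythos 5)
You have the right overall strategy---strip off the $W$-part and the higher prime powers, show each contributes $O_{A,M,W}(1)$, and reduce to the square-free sum---but the execution via Euler factors has two genuine gaps. First, $f$ is only pseudomultiplicative, not multiplicative, so the sum $\sum_{\n_K\fa\le x}f(\fa_W)f^\dagger(\fa_W)/\n_K\fa$ neither factors into, nor is bounded by, a product of local factors $\sum_{\nu}f(\fp^\nu)f^\dagger(\fp^\nu)(\n_K\fp)^{-\nu}$: iterating the defining inequality only yields $f(\fp_1^{\nu_1}\cdots\fp_k^{\nu_k})\le f(\fp_1^{\nu_1})A^{\nu_2+\cdots+\nu_k}$, never $\prod_i f(\fp_i^{\nu_i})$. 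The paper's proof instead decomposes each $\fa$ globally as $\fq\fb\fc$ with $\n_K\fq\mid W^\infty$, $\fb$ square-free and $\fc$ square-full coprime to $\fb$, applies the pseudomultiplicative bound once in the form $f(\fb\fc)\le f(\fb)A^{\Omega_K(\fc)}$, bounds the $\fq$- and $\fc$-sums by convergent products (this is where $w>2(A+M)$ enters), and removes the remaining factor $f^\dagger(\fb)$ by writing $f^\dagger=g*(1/\n_K)$ on square-free ideals with $|g(\fd)|\le M^{\Omega_K(\fd)}$, so that the $\fd$-sum converges absolutely.

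Second, even granting multiplicativity, your final step runs backwards. The local-ratio comparison would give you $\mathrm{LHS}\ll\prod_{w<\n_K\fp\le x}\bigl(1+f(\fp)/\n_K\fp\bigr)$, and expanding this product yields the truncated square-free sum \emph{plus} the non-negative terms with $\n_K\fb>x$; your observation that ``the cross terms only add non-negative contributions'' therefore shows the product dominates the desired right-hand side, not the reverse. Converting the product back into the truncated sum requires a Rankin-type tail estimate (compare the second half of the proof of Lemma \ref{lem:foul mouth}), which you do not supply. The paper's decomposition avoids this entirely, since in $\fa=\fq\fb\fd\fe$ every factor automatically has norm at most $\n_K\fa\le x$, so the truncation at $x$ is preserved throughout. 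Your treatment of the submultiplicative case is fine once the first bound is in place.
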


\begin{proof}
The final part of the lemma is obvious. To see the first part we note that 
there is a unique factorisation
$\fa=\fq \fa_W$, where $\n_K\fq\mid W^\infty$.
Here, and throughout this paper, 
for $a,b\in \NN$
the notation 
$a\mid b^\infty$ is taken to mean 
that every prime divisor of $a$ is a divisor of $b$ as well.
Next, we decompose uniquely
$\fa_W=\fb \fc$
where
$\fb,\fc$ are coprime integral ideals such that
$\fb$  is square-free and $\fc$ is square-full.
The sum in the lemma is at most 
\[ 
\sum_{\substack{ \n_K\fq \leq x \\ 
\n_K\fq\mid W^\infty
 }} \frac{1}{\n_K \fq}
\sum_{\substack{\n_K\fb\leq x  \\ \fb \text{ square-free} \\ 
\gcd(\n_K\fb,W)=1}}
\frac{
f(\fb)f^\dagger(\fb)
}{\n_K\fb}
\sum_{\substack{
\n_K\fc \leq x \\
\fc  \text{ square-full}\\
\gcd(\n_K\fc,W)=1}}
\frac{
A^{\Omega_K(\fc)}
f^\dagger(\fc)
}{\n_K\fc}
.\]
For prime ideals with  $\n_K \fp > 2(A+M)$,
we have 
\[1+
\sum_{d=2}^\infty
\frac{
A^{\Omega_K(\fp^d)}
f^\dagger(\fp^d)
}{(\n_K \fp)^d}
\leq 1+
\sum_{d=2}^\infty
\frac{
A^d
}{(\n_K \fp)^d}
\left(1+\frac{M}{\n_K\fp}\right)
\leq 
\l(1-\frac{2A^2}{(\n_K\fp)^2}\r)^{-1}.
\]
Thus   the sum over $\fc$ converges absolutely.
Defining the multiplicative function $g:\c{I}_K\to \R$ via
$g(\fp^d)=
(f^\dagger(\fp)-1)\n_K\fp,$ 
for $d\in \N$, 
we clearly have
\[
f^\dagger(\fb)=\sum_{\fb=\fd\fe}\frac{g(\fd)}{\n_K\fd},
\]
with  $|g(\fd)|\leq M^{\Omega_K(\fd)}$,  for any square-free ideal $\fb$.
Therefore 
\[
\sum_{\substack{\n_K\fb\leq x  \\ \fb \text{ square-free} \\ 
\gcd(\n_K\fb,W)=1}}
\frac{
f(\fb)f^\dagger(\fb)
}{\n_K\fb}
=
\sum_{\substack{\n_K\fe\leq x  \\ \fe \text{ square-free} \\ 
\gcd(\n_K\fe,W)=1}}
\frac{
f(\fe)
}{\n_K \fe}
\sum_{\substack{\n_K\fd\leq x/(\n_K \fe) \\ \fd \text{ square-free} \\ 
\gcd(\n_K\fd,W)=1}}
\frac{
g(\fd)
}{(\n_K\fd)^2}
.\]
The sum over $\fd$ is absolutely convergent, whence
$$
\sum_{\n_K \fa\leq x }
\frac{f(\fa_W)f^\dagger(\fa_W)}{\n_K\fa}
\ll_{A,M,W} 
\sum_{\substack{\n_K\fe\leq x  \\ \fe \text{ square-free} \\ 
\gcd(\n_K\fe,W)=1}}
\frac{
f(\fe)
}{\n_K \fe}
\sum_{\substack{ \n_K\fq \leq x \\ 
\n_K\fq\mid W^\infty }} \frac{1}{\n_K \fq}.
$$
The inner sum over $\fq$ is at most
$
\prod_{\n_K\fp \mid W^d}
(1-\frac{1}{\n_K\fp})^{-1}
\ll_{A,M,W}1,
$
which thereby  completes the proof of the lemma.
\end{proof}

\subsection{The relevance of $\cP_K$}

Let $F\in \ZZ[x,y]$ be an irreducible non-zero binary form of degree $d$,
which  is not proportional to $y$.
In particular $b=F(1,0)$ is a non-zero integer.
We recall
from \eqref{eq:burger} the   associated binary form
$\tilde F(x,y)=b^{d-1}F(b^{-1}x,y)$,
with  integer coefficients and $\tilde F(1,0)=1$.  
We  let $\theta$ be a root of the polynomial $f(x)=\tilde F(x,1)$.
Then $\theta$ is an algebraic integer and   $K=\QQ(\theta)$ is a number field of degree $d$ over $\QQ$.   
It follows that 
$\ZZ[\theta]$ is an order
of $K$ with  discriminant 
$\Delta_\theta=
|\det(\sigma_i(\omega^j))|^2$, where
 $\sigma_1,\dots,\sigma_d:K \hookrightarrow\CC$
are  the associated 
embeddings.  As is well-known, 
we have 
\begin{equation}\label{eq:tesco}
\Delta_\theta=[\fo:\ZZ[\theta]]^2D_K,
\end{equation}
where $D_K$
is the  discriminant of $K$.
Recall the definition \eqref{eq:span}
of $\cP_K^\circ$
and define
\begin{equation}\label{eq:span}
\!\cP_K\!=\!
\left\{\fa\subset \cP_K^\circ: 
\fp_1\fp_2\mid \fa \Rightarrow
\n_K\fp_1\neq 
\n_K\fp_2
 \text{ or }
\fp_1= \fp_2
\right\}
,\end{equation}
which is
the subset of 
$\cP_K^\circ$
that is 
cut out by ideals 
divisible by at most one prime ideal above each rational prime.
The following result is crucial  in our analysis and will frequently allow us to restrict attention to ideals supported on $\cP_K$.

\begin{lemma}\label{lem:reuss}
Let $(s,t)\in \Zp^2$ such that  $F(s,t) \neq 0$.
Then 
$\fa\in \cP_K$ for any integral ideal $\fa\mid (bs-\theta t)$
such that $\gcd(\n_K\fa,2b \Delta_\theta)=1$.
\end{lemma}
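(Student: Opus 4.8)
The plan is to reduce the claim to an assertion about how the rational prime below a given prime divisor of $\fa$ splits in $\fo_K$, and then to exploit that the hypothesis $\gcd(\n_K\fa,2b\Delta_\theta)=1$ keeps that rational prime coprime to the index $[\fo_K:\ZZ[\theta]]$, which is exactly what makes the Dedekind--Kummer theorem available. Throughout, let $f(x)=\tilde F(x,1)\in\ZZ[x]$ be the monic minimal polynomial of $\theta$.

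First I would record two preliminary reductions. Fix a prime ideal $\fp\mid\fa$ and let $p$ be the rational prime below it. Since $\n_K\fp$ is a power of $p$ dividing $\n_K\fa$, the hypothesis forces $p\nmid b$ and $p\nmid\Delta_\theta$, so by \eqref{eq:tesco} we get $p\nmid[\fo_K:\ZZ[\theta]]$ and $\ZZ[\theta]$ is maximal at $p$. Secondly, $p\nmid t$: otherwise $p\nmid s$ because $(s,t)$ is primitive, while $bs=(bs-\theta t)+\theta t\in\fp$ (as $bs-\theta t\in\fa\subseteq\fp$ and $p\mid t$), so $bs\in\ZZ\cap\fp=p\ZZ$, contradicting $p\nmid bs$.

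Next I would invoke the Dedekind--Kummer theorem. Writing $f\equiv\prod_i g_i^{e_i}\bmod{p}$ with the $g_i\in\FF_p[x]$ distinct monic irreducibles, we obtain $p\fo_K=\prod_i\fp_i^{e_i}$ with $\fp_i=p\fo_K+g_i(\theta)\fo_K$, the reduction map identifies $\fo_K/\fp_i$ with $\FF_p[x]/(g_i)$ by sending the class of $\theta$ to the class of $x$, and every prime of $\fo_K$ above $p$ is one of the $\fp_i$. If $\fp=\fp_i$ divides $(bs-\theta t)$, then reducing $bs-\theta t$ modulo $\fp_i$ shows that $g_i(x)$ divides $bs-xt$ in $\FF_p[x]$; as $p\nmid t$ this polynomial is linear, so $\deg g_i=1$ and $f_\fp=1$. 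Running this over all prime divisors of $\fa$ shows $\fa\in\cP_K^\circ$. It also forces $g_i(x)=x-r$ with $r\equiv bst^{-1}\bmod{p}$, a residue that does not depend on $i$; hence if $\fp_i$ and $\fp_j$ both divide $(bs-\theta t)$ then $g_i=g_j$, i.e.\ $i=j$. So at most one prime ideal above each rational prime divides $\fa$, and together with $\fa\in\cP_K^\circ$ this is exactly $\fa\in\cP_K$.

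I do not anticipate a genuine obstacle: once the coprimality hypothesis has been turned into $p\nmid[\fo_K:\ZZ[\theta]]$ through \eqref{eq:tesco}, the Dedekind--Kummer theorem does almost all the work. The only points needing a little care are the verification that $p\nmid t$ (so that $bs-xt$ is really linear over $\FF_p$) and the bookkeeping in the identification of $\fo_K/\fp_i$ with $\FF_p[x]/(g_i)$ compatibly with the class of $\theta$. The write-up should be short.
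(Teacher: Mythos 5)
Your proposal is correct, and its skeleton matches the paper's: fix a prime $\fp\mid\fa$ above a rational prime $p$, note that the coprimality hypothesis gives $p\nmid b$ and, via \eqref{eq:tesco}, $p\nmid[\fo_K:\ZZ[\theta]]$, check $p\nmid t$ (your argument through $bs\in\ZZ\cap\fp=p\ZZ$ is a valid variant of the paper's), and exploit the congruence $\theta\equiv bs\bar t\bmod{\fp}$. The one genuine difference is how residue degree one is obtained. The paper does this step bare-handed: it shows the map $\ZZ/p\ZZ\to(\ZZ[\theta]+\fp)/\fp$ is an isomorphism and then uses \eqref{eq:tesco} to rule out $p\mid[\fo_K:\ZZ[\theta]+\fp]$, concluding $\n_K\fp=p$ without quoting any splitting theorem; it only invokes Dedekind's theorem afterwards, for the statement that two distinct primes of the same norm cannot both divide $(bs-\theta t)$. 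You instead invoke Dedekind--Kummer once, at the outset, and read off both conclusions from the single observation that each relevant $g_i$ must divide the linear polynomial $bs-xt$ in $\FF_p[x]$: degree one because the divisor of a linear polynomial is linear, and uniqueness because the monic linear divisor $x-bs t^{-1}$ is unique. Your route is a bit shorter and more uniform, since one citation handles both halves; the paper's treatment of the first half is more elementary and self-contained, at the cost of some index bookkeeping. Both uses of Dedekind--Kummer are legitimate here precisely because $p\nmid\Delta_\theta$, which is the same input the paper uses, so nothing is lost in generality.
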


\begin{proof}
Let $D=2b \Delta_\theta$ and let  $(s,t)\in \Zp^2$ such that  $F(s,t) \neq 0$. We form the integral ideal  
$\mathfrak{n}=(bs-\theta t).$
This has norm $\n_K \mathfrak{n}=|\tilde F(bs,t)|.$
Let $k\mid \tilde F(bs,t)$ with $\gcd(k,D)=1$.   
In particular   $\gcd(k,\Delta_\theta)=1$. 

Now let $p\mid k.$
Then  $p\nmid t$ since $\gcd(s,t)=1$ and $p\nmid b$.
We choose $\bar t\in \ZZ$ such that $t\bar t\equiv 1\bmod{p}.$
Let 
$\fp$ be any prime ideal such that $\fp\mid (p)$ and $\fp\mid \mathfrak{n}$.
Consider the group homomorphism
$$
\pi: \ZZ/p\ZZ\to (\ZZ[\theta]+\fp)/\fp,
$$ 
given by 
$m\mapsto m+\fp$.  Suppose that $\pi(m_1)=\pi(m_2)$ for $m_1,m_2\in \ZZ/p\ZZ$. Then $m_1-m_2\in \fp$, whence $\n_K\fp\mid (m_1-m_2)^d$. But this implies that $p\mid m_1-m_2$, since $\fp\mid (p)$,  and so $\pi$ is injective. Next suppose that $P(\theta) +\fp\in 
(\ZZ[\theta]+\fp)/\fp$, where $P(\theta)=\sum_{i}c_i \theta^i$ for $c_i\in \ZZ$. Since $\fp\mid \mathfrak{n}$ and $\fp\nmid \bar t$, we 
 have 
$bs\bar t- \theta\in \fp$. Thus  $P(\theta)-P(bs\bar t)\in \fp$. Now choose  $m\in \ZZ/p\ZZ$ such that 
$m\equiv P(bs\bar t)\bmod{p}$. It then follows that 
$\pi(m)=P(\theta)+\fp$.
Thus $\pi$ is surjective and so it is an isomorphism.  Hence
$
[\ZZ[\theta]+\fp:\fp]=p.
$
In view of \eqref{eq:tesco}, we also have 
$$
D_K[\fo:\ZZ[\theta]+\fp]^2[\ZZ[\theta]+\fp:\ZZ[\theta]]^2=\Delta_\theta.
$$
This implies that $p\nmid [\fo:\ZZ[\theta]+\fp]$.  Since $\n\fp$ is power of $p$, 
we readily  conclude that  
$$
\n\fp=[\fo:\ZZ[\theta]+\fp][\ZZ[\theta]+\fp:\fp]=p.
$$
This therefore establishes  
that $\fa \in \cP_K^\circ$.

To finish the proof   it remains to show that there are no 
distinct prime ideals 
$\fp_1,\fp_2$ with 
$\fp_1\fp_2\mid \fa$
and
$\n\fp_1=\n\fp_2$.
Suppose for a contradiction that there exist such primes $\fp_1\neq \fp_2$.
Letting $p=\n\fp_1=\n\fp_2$ and noting that 
 $p\nmid \Delta_\theta$, an application of 
Dedekind's theorem on factorisation of 
prime ideals
supplies us with  
 distinct $n_1,n_2\in \ZZ/p\ZZ$ such that 
$$
f(x)\equiv (x-n_1)(x-n_2)
\Upsilon(x) 
 \bmod{p},
$$
for a  polynomial $
\Upsilon  \in (\ZZ/p\ZZ)[x]$ of degree $[K:\Q]-2$, 
with $\fp_1=(p,\theta-n_1)$ and  $\fp_2=(p,\theta-n_2)$.
 We conclude from this that 
$bs\bar t-\theta \in \fp_1$ and $\theta-n_1\in \fp_1$, whence 
$bs\bar t-n_1\in \fp_1$. Similarly, we have
$bs\bar t-n_2\in \fp_2$.  But then it follows that  $p=\n\fp_1\mid bs\bar t-n_1$ and
 $p=\n\fp_2\mid bs\bar t-n_2$.
This implies that $n_1\equiv n_2\bmod{p}$, which is a contradiction. 
\end{proof}

We close this section with an observation about the condition 
$\mathfrak{a}
\mid (b s-\theta t)$ that appears in Lemma \ref{lem:reuss}.

\begin{lemma}\label{lem:honey}
Let $\fa\in \cP_K$ 
such that $\gcd(\n_K\fa,D_K)=1$.  
Then there exists $k
=k(\fa)
\in \ZZ$ such that
$$
\fa\mid (bs-\theta t) \Leftrightarrow 
bs\equiv k t\bmod{\n_K \fa}
$$
for all $(s,t)\in \ZZ^2$,
\end{lemma}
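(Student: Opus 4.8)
The plan is to reduce the divisibility condition $\fa\mid(bs-\theta t)$ to a congruence condition on $bs/t$ modulo $\n_K\fa$, using the Chinese Remainder Theorem on the prime factorisation of $\fa$ together with the structure of $\cP_K$. First I would factor $\fa=\fp_1\cdots\fp_r$ into prime ideals; since $\fa\in\cP_K$ these primes lie above distinct rational primes $p_1,\dots,p_r$ (or are equal, but distinctness of the $p_j$ follows from the defining property of $\cP_K$), each has residue degree $1$, and $\gcd(\n_K\fa,D_K)=1$ ensures each $p_j$ is unramified in $\fo_K$. Thus $\n_K\fa=p_1\cdots p_r$ is squarefree and coprime to $D_K$. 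The key local computation is that for an unramified degree-$1$ prime $\fp\mid(p)$, one has $\fp=(p,\theta-n)$ for a unique $n\bmod p$ (this is Dedekind's theorem, exactly as invoked in the proof of Lemma~\ref{lem:reuss}), and hence for $(s,t)$ with $p\nmid t$ we get $\fp\mid(bs-\theta t)$ iff $bs\equiv nt\bmod p$. When $p\mid t$, since $\gcd(s,t)=1$ forces $p\nmid s$ and $p\nmid b$, we would have $\fp\mid(bs-\theta t)$ forcing $\n_K\fp=p\mid\nm(bs-\theta t)=\tilde F(bs,t)\equiv \tilde F(bs,0)=(bs)^d\bmod p$, a contradiction; so in that case $\fp\nmid(bs-\theta t)$ and the congruence $bs\equiv nt\bmod p$ also fails provided we choose $n$ appropriately — actually one must be slightly careful here, and the cleanest route is to note that $bs\equiv nt\bmod p$ with $p\mid t$ would force $p\mid bs$, again impossible, so both sides of the claimed equivalence are false and the equivalence holds vacuously.

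Next I would assemble the local data: for each $j$, let $n_j\bmod p_j$ be the residue with $\fp_j=(p_j,\theta-n_j)$, and use CRT to pick $k\in\ZZ$ with $k\equiv n_j\bmod p_j$ for all $j$; this $k$ depends only on $\fa$. Then $\fa\mid(bs-\theta t)$ iff $\fp_j\mid(bs-\theta t)$ for every $j$ iff $bs\equiv n_jt\equiv kt\bmod p_j$ for every $j$ iff $bs\equiv kt\bmod{\n_K\fa}$, the last step using that the $p_j$ are distinct and $\n_K\fa=\prod p_j$. The only genuine point requiring care is the equivalence "$\fa\mid(bs-\theta t)$ iff $\fp_j\mid(bs-\theta t)$ for all $j$": this needs $\fa$ to be the product of the $\fp_j$ with no higher powers, which is guaranteed because $\fa$ is squarefree — here I use that $\fa\in\cP_K$ and $\gcd(\n_K\fa,D_K)=1$ imply $\fa$ is a squarefree ideal. (If the problem intends $\fa$ possibly non-squarefree, one replaces $p_j$ by the appropriate prime power $p_j^{e_j}$, lifts $n_j$ via Hensel since $p_j$ is unramified hence $\tilde F$ has a simple root mod $p_j$, and runs the same CRT argument; I would include this case for completeness.)

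The main obstacle — really the only subtlety — is handling the primes $p_j$ dividing $t$, i.e. making sure the equivalence is not broken when $t$ is not invertible modulo $\n_K\fa$. As sketched above this is resolved by observing that $\gcd(s,t)=1$ and $p_j\nmid b$ (since $b\mid D_K$ up to the factor $2b$... more precisely $p_j\nmid b\Delta_\theta$ follows from $\gcd(\n_K\fa,D_K)=1$ only after also knowing $p_j\nmid b$; if one wants $p_j\mid b$ excluded one should strengthen the hypothesis or note $b\mid\Delta_\theta^{?}$) force both $\fp_j\mid(bs-\theta t)$ and $bs\equiv kt\bmod{p_j}$ to fail simultaneously. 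I would therefore state the lemma's hypothesis as $\gcd(\n_K\fa, 2b\,\Delta_\theta)=1$ to match Lemma~\ref{lem:reuss}, or verify that $\gcd(\n_K\fa,D_K)=1$ together with $\fa\in\cP_K$ already delivers $p_j\nmid b$ (which it does not in general, so the hypothesis should read $\gcd(\n_K\fa, b D_K)=1$); everything else is a routine CRT bookkeeping exercise.
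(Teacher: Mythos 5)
There are two genuine problems with your argument. First, the main line rests on the claim that $\fa\in\cP_K$ with $\gcd(\n_K\fa,D_K)=1$ is squarefree, which is false: the definition of $\cP_K$ only forbids two \emph{distinct} prime ideals of equal norm from dividing $\fa$, so prime powers such as $\fp^{2}$ lie in $\cP_K$, and the lemma is in fact applied to ideals with prime-power factors (the lattices $\Lambda(\fc_1,\dots,\fc_N)$ in the proof of Lemma~\ref{lem:salami} involve $\fc_i=\fa_i\fe_i$ which need not be squarefree). The paper's proof accordingly reduces by CRT to $\fa=\fp^{a}$ with $a\geq 1$ arbitrary. Your parenthetical Hensel patch points in the right direction but is not your main argument and is itself incomplete: you would still have to check that a Hensel lift $n_e$ of the root satisfies $\nu_\fp(\theta-n_e)\geq e$, i.e.\ that no other prime above $p$ absorbs the valuation of $\tilde F(n_e,1)$. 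The cleaner route (the paper's) is to observe that $\ZZ/p^{a}\ZZ\to\fo_K/\fp^{a}$ is an isomorphism when $\fp$ is unramified of residue degree $1$, which directly produces $k$ with $k\equiv\theta\bmod{\fp^{a}}$.

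Second, your treatment of primes $p\mid t$ is both unnecessary and incorrect as written. The lemma is asserted for \emph{all} $(s,t)\in\ZZ^2$, with no primitivity hypothesis, so you may not invoke $\gcd(s,t)=1$; and for non-primitive pairs your intermediate conclusion that $\fp\nmid(bs-\theta t)$ when $p\mid t$ fails (take $(s,t)=(p,p)$: then $\fp\mid(p)\mid(bs-\theta t)$, and both sides of the equivalence are true, not false). This matters because the lemma is used to compute determinants of lattices consisting of all integer solutions of the congruences. The entire case distinction evaporates if you avoid dividing by $t$: once $k\equiv\theta\bmod{\fp^{a}}$, one has $bs-\theta t\equiv bs-kt\bmod{\fp^{a}}$, and the equivalence follows from $\ZZ\cap\fp^{a}=p^{a}\ZZ$, valid because $\fp$ is unramified of degree $1$ so that $\nu_p(m)=\nu_{\fp}((m))$ for $m\in\ZZ$. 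In particular no condition on $\gcd(\n_K\fa,b)$ is needed anywhere, and your proposed strengthening of the hypothesis to $\gcd(\n_K\fa,bD_K)=1$ or $\gcd(\n_K\fa,2b\Delta_\theta)=1$ is not required.
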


\begin{proof}
It suffices
to check this when 
$\fa=\fp^a$, for some $\fp\in \cP_K$ such that $p=\n_K\fp$ is unramified,
by the Chinese remainder theorem.
This is because   the definition of $\cP_K$ implies that for every rational unramified prime $p$
there is at most one prime ideal $\fp$ above $p$ such that $\fp \mid \fa$. To continue with the proof we note that 
since  $\fp\in \cP_K$,  there exists 
$k'\in \ZZ$ satisfying
$k'\equiv \theta \bmod{\fp}$, whence there exists $k\in \ZZ$ such that
$k\equiv \theta \bmod{\fp^a}$.
Therefore 
\begin{align*}
b s \equiv \theta t\bmod{\fp^a} ~\Leftrightarrow~
b s-k t \in \Z \cap \fp^a.
\end{align*}
We claim that the latter condition is equivalent to
$b s\equiv k t  \bmod{\n_K \fp^a}$. The reverse implication is obvious since $\n_K\fa\in \fa$ for any integral ideal $\fa$. 
The forward implication follows on noting that $\nu_p(bs-kt)\geq \nu_{\fp}((bs-kt))\geq a$.
\end{proof}

\section{The main argument}

This section is devoted to the proof of Theorem \ref{t:NT}, following
an approach that is inspired by  work of Shiu \cite{shiu}.
We assume familiarity with the notation introduced in \S \ref{s:intro}.
Since $F_1,\ldots ,F_N$ 
are pairwise coprime it follows that the resultants $\res(F_i,F_j)$ are all non-zero integers 
for $i\neq j$. 
Along the way, at certain stages of the argument,  we will need to enlarge the size of $W$ in \eqref{eq:WW}. For now we assume 
that $w>\max_{i\neq j} \{|D_i|, |\res(F_i,F_j)|\}$, where  
$D_i=2b_i \Delta_{\theta_i}$
and $\res$ denotes the resultant of two polynomials.
We let $\n_i=\n_{K_i}$ and write $F=\prod_{i=1}^NF_i$.
Let 
\begin{equation}
\label{eq:z}
z=V^\omega,
\end{equation}
where $V=\vol(\c{R})$, 
for a  small constant 
$\omega>0$  
that will be chosen in due course.
(In particular, it will need to be sufficiently 
small in terms of $\ve_1,\dots, \ve_N$.) 
For each $(s,t)\in \ZZp^2\cap \c{R}\cap G$, it follows from \eqref{eq:rancid} that we have a factorisation
\[
\prod_{i=1}^N
|\n_i(b_is-\theta_i t)_{q_G W}|
=\prod_{i=1}^N
|F_i(s,t)|_{q_G W}
=
|F(s,t)|_{q_G W}
=p_1^{\alpha_1}
\cdots p_l^{\alpha_l},
\] 
with $w<p_1<\cdots<p_l$.
We define $a_{s,t}$ to be the greatest integer of the form
$p_1^{\alpha_1}
\cdots p_j^{\alpha_j}$ which is bounded by  $z$ and we 
define $b_{s,t}=F(s,t)_{q_GW}/a_{s,t}$.
We have $\gcd(a_{s,t},b_{s,t})=1$ and 
$P^-(b_{s,t})>P^+(a_{s,t})$.
Our lower bound for $w$ ensures that 
\[
\gcd(\n_i \fa_i,\n_j \fa_j)=
1,
\]
for any $\fa_i\mid (b_is-\theta_it)_{q_GW}$ and 
$\fa_j\mid (b_js-\theta_jt)_{q_GW}$, 
with  $i\neq j$.
Lemma \ref{lem:reuss} implies that $\fa_i\in \cP_i=\cP_{K_i}$, for $1\leq i\leq N$.

The sum 
in which we are interested,
$$
\sum_{(s,t) \in \Zp^2 \cap \c{R}\cap G}
\prod_{i=1}^N
f_{i,q_GW}((b_i s -\theta_i t)),
$$
 is  sorted
into four distinct contributions
$E^{(I)}(\c{R}), \dots, E^{(IV)}(\c{R}).
$ 
For an appropriate small parameter $\eta>0$, 
these  sums are determined by which of the following attributes are satisfied by $(s,t)$:
\begin{itemize}
\item[({I})] $P^-(b_{s,t}) \geq z^{\frac{\eta}{2}}$;
\item[({II})] 
 $P^-(b_{s,t}) < z^{\frac{\eta}{2}}$ and 
 $a_{s,t}\leq z^{1-\eta}$;
\item[({III})] 
$P^-(b_{s,t})\leq \log z \log \log z$ and 
$z^{1-\eta}<a_{s,t}\leq z$;
\item[({IV})] 
$\log z \log \log z<
P^-(b_{s,t})<z^\frac{\eta}{2}$ and 
$z^{1-\eta}<a_{s,t}\leq z$.
\end{itemize}
In what follows, we will allow all of our implied constants to depend on 
$c_1,c_2,A_i,B_i, F_i, \ve, \ve_i,N,W$,  
as in the statement of Theorem \ref{t:NT}, as well as on $\omega$ and $\eta$, whose values will be indicated during the course of the proof.
Any further dependence will be indicated by an explicit subscript.

We let $\Omega_i=\Omega_{K_i}$ 
be the number of prime ideal divisors (counted with multiplicity)
and note that 
$\Omega_i(\fa)=\Omega(\n_i\fa)$ when $\fa\in \cP_i$. 
For given $(s,t)$, the choice of  $a_{s,t},b_{s,t}$ that we have made uniquely determines
coprime ideals 
$\fa_{s,t}^{(i)},
\fb_{s,t}^{(i)}
\subset \fo_i$, 
with  $(b_is-\theta_it)_{q_GW}=\fa_{s,t}^{(i)}\fb_{s,t}^{(i)}$, such that 
\[
\prod_{i=1}^N\n_i\fa_{s,t}^{(i)}=a_{s,t}
\quad 
\text{ and }
\quad
\Omega_i(\fa_{s,t}^{(i)})=
\Omega(\n_i\fa_{s,t}^{(i)}).
\]
In particular we emphasise that  $\fa_{s,t}^{(i)},\fb_{s,t}^{(i)}$
are supported on prime ideals whose norms are coprime to  $q_GW$.
We now have everything in place to start estimating the various contributions. 
Our main tools will be the geometry of numbers and the fundamental lemma of sieve theory. 

\subsection*{Case I}
We begin by considering  the case $P^-(b_{s,t}) \geq z^{\frac{\eta}{2}}$.
Recalling that $f_i\in \c{M}_{K_i}$, 
we have 
$
f_{i,q_GW}((b_is-\theta_i t))
\leq
f_i(\fa_{s,t}^{(i)})
A_i^{\Omega_{i}(\fb_{s,t}^{(i)})}, 
$
by the coprimality of $\fa_{s,t}^{(i)},\fb_{s,t}^{(i)}$.
Hence
\begin{equation}\begin{split}\label{eq:liver}
E^{(I)}(\c{R})
&=
\sum_{
\substack{
(s,t) \in \Zp^2 \cap \c{R}\cap G\\
P^-(b_{s,t})\geq z^{\frac{\eta}{2}}
}}
\prod_{i=1}^N f_{i,q_GW}((b_i s-\theta_i t))
\\
&\ll
\sum_{\substack{
\fa_i \in \c{P}_i
\\
\gcd(\n_i \fa_i,q_GW\n_j\fa_j)=1
\\
\prod_{i=1}^N\n_i\fa_{i}
\leq z
}}
\hspace{-0.4cm}
\c{U}(\fa_1,\dots,\fa_N)
\prod_{i=1}^N f_i(\fa_i),
\end{split}\end{equation}
where 
\[
\c{U}(\fa_1,\dots,\fa_N)
=
\hspace{-0.5cm}
\sum_{\substack{
(s,t) \in \Zp^2 \cap \c{R}\cap G\\
(b_is-\theta_i t)_{q_GW}
\in \cP_i\\
\fa_i \mid (b_is-\theta_i t)
\\
(\fa_i,(b_is-\theta_i t)/\fa_i)_i=1\\
p\mid F(s,t)_{q_GW}/\prod_{i=1}^N
\n_i \fa_{i} \Rightarrow p\geq z^\frac{\eta}{2} 
}}
\hspace{-0.5cm} 
\prod_{i=1}^N 
A^{\Omega_{i}\l((b_i s-\theta_i t)_{q_GW}/ {\fa_i}\r)}
.\]
Here, the condition 
$
\prod_{i=1}^N
\n_i \fa_{i} 
\leq z
$ comes from the fact that 
$a_{s,t} \leq z$. Moreover,
we write $(\fa,\fb)_i=1$ if and only if  the ideals $\fa,\fb\subset \fo_i$ are coprime.

Defining $b$ through
$b\prod_{i=1}^N
\n_i \fa_{i} =\prod_{p\nmid q_GW}p^{\nu_p(F(s,t))}$,
we see that 
\[
(z^\frac{\eta}{2})^{\Omega(b)}
\leq P^-(b)^{\Omega(b)}
\leq |b|
\leq
|F(s,t)|
\ll \|\c{R}\|_\infty^{\deg(F)}
\leq K_{\c{R}}^{\deg(F)}.
\]
In view of \eqref{eq:z} and the inequality  $K_\c{R}^{c_1} \leq V$ that is assumed in Theorem~\ref{t:NT}, this shows that 
$\Omega(b)\ll 1$.
Noting that
\[\sum_{i=1}^N
\Omega_{i}\l(\frac{(b_i s-\theta_i t)_{q_GW}}{ \fa_i}\r)
=\Omega(b),\]
we may therefore conclude  that 
\begin{equation}
\label{eq:upper-u}
\c{U}(\fa_1,\dots,\fa_N)\ll
\c{U}_\frac{\eta}{2}(\fa_1,\dots,\fa_N),
\end{equation}
where for any $\gamma>0$ we define 
$\c{U}_\gamma(\fa_1,\dots,\fa_N)$ to be the cardinality of $(s,t)$ 
appearing in the definition of 
$\c{U}(\fa_1,\dots,\fa_N)$, with the lower bound
$z^\frac{\eta}{2}$ replaced by $z^\gamma$.
Our next  concern is with an upper bound for this quantity.

Before revealing our estimate for 
$\c{U}_\gamma(\fa_1,\dots,\fa_N)$,
recall the definition of $h^*$  from \eqref{eq:faggot}
and set
\begin{equation}\label{eq:faggot'}
h^\dagger(k)=\prod_{p\mid k} \left(1-\frac{d_1+\dots+d_N}{1+p}\right)^{-1}.
\end{equation} 
Then we have the following result.

\begin{lemma}
\label{lem:salami}
Let $\delta,\gamma>0$ and let  $\fa_i\in \cP_i$, for $1\leq i\leq N$, with 
\[
\gcd(\n_i \fa_i,q_GW\n_j\fa_j)=1\quad\text{ and }
\quad
\prod_{i=1}^N
\n_i\fa_{i} \leq z.
\] 
Then 
\begin{align*}
\c{U}_\gamma(\fa_1,\dots,\fa_N)
\hspace{-0.08cm}
\ll_\delta
\hspace{-0.08cm}
\frac{V}{\gamma^{N}(\log z)^{N}}
\frac{h_W^*(q_G)}{q_G}
\prod_{i=1}^N 
\frac{
h^\dagger(\n_i\fa_i)}{\n_i\fa_i}
+\frac{K_\c{R} z^{2\gamma+\delta}}{\lambda_G},
\end{align*} 
uniformly in $\gamma$.
\end{lemma}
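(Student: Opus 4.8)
The plan is to recast $\c{U}_\gamma(\fa_1,\dots,\fa_N)$ as an upper‑bound sieve problem on a sublattice of $G$, and then to combine the fundamental lemma of sieve theory with the lattice‑point count of Lemma~\ref{lem:5.3}. Write $\mathcal Q=\prod_{i=1}^N\n_i\fa_i$, so $\mathcal Q\le z$ and $\gcd(\mathcal Q,q_GW)=1$. For an upper bound I would first discard the conditions $(b_is-\theta_it)_{q_GW}\in\cP_i$ and $(\fa_i,(b_is-\theta_it)/\fa_i)_i=1$, and weaken the cofactor condition to: no prime $p$ with $w<p<z^\gamma$, $p\nmid q_G\mathcal Q$, divides $F(s,t)$. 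Since $\c{R}$ is regular it contains no zero of $F$, and since $w>\max_i|D_i|$ we have $\gcd(\n_i\fa_i,D_{K_i})=1$; hence Lemma~\ref{lem:honey} turns each condition $\fa_i\mid(b_is-\theta_it)$ into a linear congruence $b_is\equiv k_it\bmod{\n_i\fa_i}$. Imposing these on $G$ cuts out a sublattice $\widetilde G\subseteq G$ of index $\mathcal Q$ (using $\gcd(b_i,\n_i\fa_i)=1$ and $\gcd(\mathcal Q,q_G)=1$), with $\det\widetilde G=q_G\mathcal Q$ and $\lambda_{\widetilde G}\ge\lambda_G$; putting it in Hermite normal form makes Lemma~\ref{lem:5.3} applicable to it. If $\widetilde G$ is imprimitive then $\c{U}_\gamma=0$; otherwise the task is to bound
$$
S:=\#\big\{(s,t)\in\Zp^2\cap\c{R}\cap\widetilde G:\ p\mid F(s,t),\ w<p<z^\gamma\ \Rightarrow\ p\mid q_G\mathcal Q\big\}.
$$
It is this \textbf{folding of the $\mathcal Q$-congruences into the lattice} that is the decisive move: working modulo $d\mathcal Q$ in the sieve instead would contribute to the error a factor $\mathcal Q$, of size up to $z$, which is far too large.

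For the local densities, let $A_d=\#\{(s,t)\in\Zp^2\cap\c{R}\cap\widetilde G:\ d\mid F(s,t)\}$ for squarefree $d$ with $w<P^-(d)$, $P^+(d)<z^\gamma$ and $\gcd(d,q_G\mathcal Q)=1$. As $w$ exceeds the discriminants of the $\tilde F_i$ and all resultants $\res(F_i,F_j)$, for each prime $p\mid d$ the form $F=\prod_iF_i$ has exactly $\bar\rho(p):=\bar\rho_1(p)+\dots+\bar\rho_N(p)$ projective roots mod $p$, hence $(p-1)\bar\rho(p)$ primitive residues $\x_0\bmod p$ solving $F\equiv0$. Splitting $A_d$ over the $\phi(d)\bar\rho_F(d)$ admissible primitive residues $\x_0\bmod d$ (with $\bar\rho_F$ multiplicative, $\bar\rho_F(p)=\bar\rho(p)$) and applying Lemma~\ref{lem:5.3} to $\widetilde G$ with modulus $d$ — legitimate since $\gcd(\det(\widetilde G)\,\x_0,d)=1$ — the Euler factors collapse to give
$$
A_d=g(d)\,X+O\!\left(\phi(d)\,\bar\rho_F(d)\,\frac{\tau(q_G\mathcal Q)\,K_\cR}{\lambda_G}\right),\qquad g(d)=\prod_{p\mid d}\frac{\bar\rho(p)}{p+1},
$$
where $X=\frac{\vol\c{R}}{\zeta(2)\,q_G\mathcal Q}\prod_{p\mid q_G\mathcal Q}(1+1/p)^{-1}$. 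Crucially the error of Lemma~\ref{lem:5.3} involves only $\tau(\det\widetilde G)=\tau(q_G\mathcal Q)$, not the modulus $d$, which is what keeps it manageable after summation.

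Next I would feed the $A_d$ into the fundamental lemma of sieve theory with sifting limit and level both equal to $z^\gamma$, so that the remainder is $\sum_{d<z^\gamma}|r_d|$ with no extra divisor factor. The density $g$ has dimension exactly $N$: Landau's prime ideal theorem for each $K_i$ gives $\sum_{p\le x}\bar\rho_i(p)\log p/p=\log x+O(1)$, hence $\sum_{p\le x}g(p)\log p=N\log x+O(1)$ — here the irreducibility of each $F_i$ enters, and \textbf{pinning the sieve dimension at $N$, so the power of $\log z$ comes out right, is the second key point}. One obtains
$$
S\ \ll\ X\!\!\prod_{\substack{w<p<z^\gamma\\ p\nmid q_G\mathcal Q}}\!\!\Big(1-\frac{\bar\rho(p)}{p+1}\Big)\ +\ \frac{\tau(q_G\mathcal Q)\,K_\cR}{\lambda_G}\sum_{d<z^\gamma}\phi(d)\,\bar\rho_F(d).
$$

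Finally I would massage the main term into the asserted shape. Completing the product to all $w<p<z^\gamma$, Mertens's theorem (dimension $N$) gives $\prod_{w<p<z^\gamma}(1-g(p))\ll_W(\gamma\log z)^{-N}$, while the restored factors at $p\mid q_G\mathcal Q$ are $\prod_{p\mid q_G,\,p>w}(1-g(p))^{-1}\prod_{p\mid\mathcal Q}(1-g(p))^{-1}=h_W^*(q_G)\prod_{p\mid\mathcal Q}(1-g(p))^{-1}$. Since $\prod_{p\mid q_G}(1+1/p)^{-1}\le1$ and, for each $p>w$ dividing $\mathcal Q$, the elementary inequality $\frac{(1-g(p))^{-1}}{1+1/p}\le\big(1-\frac{d_1+\dots+d_N}{p+1}\big)^{-1}$ holds (because $\bar\rho(p)\le d_1+\dots+d_N$ and $p>w$), the quantity $\frac1{\mathcal Q}\prod_{p\mid\mathcal Q}\frac{(1-g(p))^{-1}}{1+1/p}$ factorises as $\prod_i\frac1{\n_i\fa_i}\prod_{p\mid\n_i\fa_i}\frac{(1-g(p))^{-1}}{1+1/p}\le\prod_i\frac{h^\dagger(\n_i\fa_i)}{\n_i\fa_i}$; hence the main term is $\ll\frac{V}{\gamma^N(\log z)^N}\frac{h_W^*(q_G)}{q_G}\prod_i\frac{h^\dagger(\n_i\fa_i)}{\n_i\fa_i}$. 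For the remainder, $\phi(d)\bar\rho_F(d)\ll_\delta d^{1+\delta}$ and $\tau(q_G\mathcal Q)\ll_\delta(q_G\mathcal Q)^\delta\ll_\delta(V^{c_2}z)^\delta$, so with $z=V^\omega$ and $\delta$ taken small it is $\ll z^{2\gamma+\delta}K_\cR/\lambda_G$ (uniformly for $\gamma$ bounded; for larger $\gamma$ this term already swamps $S$, so nothing is lost). Adding the two contributions gives the lemma. Apart from the two highlighted points the argument is routine, if bookkeeping‑heavy.
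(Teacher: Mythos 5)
Your argument is correct and follows essentially the same route as the paper: fold the conditions $\fa_i\mid(b_is-\theta_it)$ into a sublattice via Lemma~\ref{lem:honey}, run the fundamental lemma of sieve theory with the dimension-$N$ density $\bar\rho_1(p)+\dots+\bar\rho_N(p)$ over $p+1$, count points with Lemma~\ref{lem:5.3}, and restore the Euler factors at $p\mid q_G$ and $p\mid \n_i\fa_i$ to produce $h_W^*(q_G)$ and $h^\dagger(\n_i\fa_i)$. The only real deviation is that you discard the coprimality condition $(\fa_i,(b_is-\theta_it)/\fa_i)_i=1$ outright (legitimate, since the sieve weights $\lambda_d^+$ make the inner sum non-negative pointwise), whereas the paper retains it and unfolds it by a M\"obius sum over $\fe_i\mid\fa_i$; this changes nothing of substance.
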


We defer the proof of this result, temporarily, and show how it can be used to complete the treatment of $E^{(I)}(\c{R})$,
via \eqref{eq:liver} and \eqref{eq:upper-u}.
We  apply Lemma~\ref{lem:salami}  with $\gamma=\frac{\eta}{2}$. 
We also note that since $f_i\in \c{M}_{K_i}(A_i,B_i,\ve_i)$, we have 
\[\prod_{i=1}^Nf_i(\fa_i)
\ll
\prod_{i=1}^N \n_i(\fa_i)^{\ve_i}
\leq 
\prod_{i=1}^N \n_i(\fa_i)^{\hat \ve},
\]
where $\hat \ve=\max \{\ve_1,\dots,\ve_N\}$.
The overall contribution from the second term is therefore  found to be
\begin{align*} 
&\ll_\delta
\frac{K_\c{R}z^
{\eta+\delta+\hat\ve} 
}{\lambda_G}
\#\left\{ \fa_1,\dots,\fa_N: 
\n_1\fa_{1}
\cdots \n_N\fa_{N}
\leq z\right\}\\
&\ll_\delta
\frac{K_\c{R}z^{1+\eta+2\delta+\hat \ve}}{\lambda_G}\\
&
\leq \frac{K_\c{R}^{1+4\omega(1+\eta+2\delta+\hat \ve)}}{\lambda_G},
\end{align*}
where we used the bound 
$V\leq K_\c{R}^4$, 
as well as $z=V^\omega$.
In view of \eqref{eq:liver} and \eqref{eq:upper-u}, the first term in Lemma \ref{lem:salami} makes the overall 
contribution 
\begin{align*}
\ll_{\delta}~&
\frac{V}{(\log z)^{N}} 
\frac{h_W^*(q_G)}{q_G}
\prod_{i=1}^N
\sum_{\substack{
\fa_i \in \cP_i
\\
\gcd(\n_i \fa_i,q_G W)=1
\\
\n_i\fa_{i}
\leq z
}}
\hspace{-0.2cm}
\frac{f_i(\fa_i) h^\dagger(\n_i\fa_i)}{\n_i\fa_i}.
\end{align*}
Since  $\fa_i\in \cP_i$,  \eqref{eq:faggot'} implies that 
\begin{align*} 
h^\dagger(\n_i\fa_i)
\leq \prod_{\fp \mid \fa_i}\l(1-\frac{d_1+\dots+d_N}{1+\n_i\fp}\r)
^{-1}
=
h_i^\ddagger(\fa_i),
\end{align*}
say, where we recall that $d_i=[K_i:\QQ]$.
We enlarge $w$ in order  to use Lemma~\ref{lem:knut},
and thereby  obtain the overall contribution
$$
\ll_{\delta}
\frac{V}{(\log z)^N}
\frac{h_W^*(q_G)}{q_G}
\prod_{i=1}^N
\sum_{\substack{
\fa_i \in \cP_i
\\
\gcd(\n_i \fa_i,q_G W)=1
\\
\fa_i \ \text{square-free}
\\
\n_i\fa_{i}
\leq z
}}\frac{f_i(\fa_i)}{\n_i\fa_i}.
$$
We have therefore proved that for every $\delta>0$ we have the bound 
\begin{equation}
\label{eq:ee11}
E^{(I)}(\c{R})
\ll_{\delta}
\frac{V}{(\log z)^N}
\frac{h_W^*(q_G)}{q_G}
\prod_{i=1}^N
\sum_{\substack{
\fa_i \in \cP_i
\\
\gcd(\n_i \fa_i,q_G W)=1
\\
\fa_i \ \text{square-free}
\\
\n_i\fa_{i}
\leq z
}}\frac{f_i(\fa_i)}{\n_i\fa_i}
+ \frac{K_\c{R}^{1+4\omega(1+\eta+2\delta+\hat \ve)}}{\lambda_G} 
,\end{equation}
where we recall that  $\hat \ve=\max \{\ve_1,\dots,\ve_N\}$. 

\begin{proof}[Proof of Lemma \ref{lem:salami}]
Let  $\fc_i  \in \cP_i$ be given, with $\gcd(\n_i \fc_i,q_GW\n_j\fc_j)=1$. 
Define the set 
\[
\Lambda(\fc_1,\dots,\fc_N)
=\left\{
(s,t) \in \Z^2\cap G :
b_is \equiv \theta_i t\bmod{\fc_i} ,  \text{ for $i=1,\dots, N$}
\right\}.
\]
Since
$\gcd(q_G,\prod_i\n_i \fc_i)=1$, it follows from Lemma \ref{lem:honey}
that 
this  defines a lattice in $\Z^2$ of rank $2$ and determinant 
$$
\mathrm{det}(\Lambda(\fc_1,\dots,\fc_N))=q_G\prod_{i=1}^N\n_i\fc_i.
$$

Write $P(z_0)=\prod_{p<z_0}p$, for any $z_0>0$, with the usual convention that 
$P(z_0)=1$ if $z_0<2$. This allows us to write
\begin{align*}
\c{U}_\gamma(\fa_1,\dots,\fa_N)
&\leq
\hspace{-0.3cm}
\sum_{\substack{
(s,t) \in S\\
(\fa_i,(b_is-\theta_i t)/\fa_i)_i=1\\
p\mid F(s,t)_{q_GW}/\prod_{i=1}^N
\n_i \fa_{i}   \Rightarrow p\geq z^\gamma
}}
\hspace{-0.3cm}
1
=
\hspace{-0.3cm}
\sum_{\substack{
(s,t) \in S\\
(\fa_i,(b_is-\theta_i t)/\fa_i)_i=1}}
\sum_{\substack{d\mid F(s,t)/\prod_{i=1}^N
\n_i\fa_i\\
\gcd(d,q_GW)=1
\\
d\mid P(z^\gamma)}}
\mu(d).
\end{align*}
where $S=\ZZp^2\cap \c{R}\cap \Lambda(\fa_1,\dots,\fa_N)$.
We shall use the fundamental lemma of sieve theory, as presented by Iwaniec and Kowalski~\cite[\S~6.4]{iwko}.
This  provides us with a sieve sequence $\lambda_d^+$ supported on square-free integers in the interval $[1,2z^\gamma]$, with 
$\lambda_1^+=1$ and $|\lambda_d^+|\leq 1$, 
such that 
\begin{align*}
\c{U}_\gamma(\fa_1,\dots,\fa_N)
&\leq
\sum_{\substack{
(s,t) \in S\\
(\fa_i,(b_is-\theta_i t)/\fa_i)_i=1}}
\sum_{\substack{d\mid F(s,t)/\prod_{i=1}^N
\n_i\fa_i\\
\gcd(d,q_GW)=1
\\
d\mid P(z^\gamma)}}
\lambda_d^+.
\end{align*}
Since $\gcd(a_{s,t},b_{s,t})=1$, we note that only $d$ coprime to $\prod_{i=1}^N
\n_i \fa_i$ appear in the inner sum. Interchanging the order of summation, we find that 
\begin{align*}
\c{U}_\gamma(\fa_1,\dots,\fa_N)
&\leq
\sum_{\substack{
\fe_i\mid \fa_i
}}
\mu_1(\fe_1)
\cdots \mu_N(\fe_N)
\hspace{-0.5cm}
\sum_{\substack{1\leq d \leq 2z^\gamma
\\ \gcd(d,q_GW)=1 \\
\gcd(d,\prod_{i=1}^N \n_i\fa_i)=1
\\
d\mid P(z^\gamma)}}
\hspace{-0.4cm}
\lambda^+_d
\sum_{\substack{
(s,t) \in\ZZp^2\cap\c{R}\cap \Lambda(\fa_1\fe_1,\dots,\fa_N\fe_N) \\
d\mid F(s,t)}}
\hspace{-0.4cm}
1\\
&=
\sum_{\substack{
\fe_i\mid \fa_i
}}
\mu_1(\fe_1)
\cdots \mu_N(\fe_N)
\hspace{-0.5cm}
\sum_{\substack{
d_1,\dots,d_N \in \N
\\
\gcd(d_i,q_GW\n_i\fa_i)
=\gcd(d_i,d_j \n_j\fa_j)=1 \\
d_1
\cdots d_N \leq 2z^\gamma
\\
d_1\cdots d_N \mid P(z^\gamma)
}}
\hspace{-0.5cm}
\lambda^+_{d_1 \cdots d_N}
S(\b{d}),
 \end{align*}
 where  
if  $d=d_1\cdots d_N$, then 
$$
S(\b{d})=
\sum_{\substack{(\sigma, \tau) \bmod{d}\\
\gcd(\sigma,\tau,d)=1\\
F_i(\sigma,\tau)\equiv 0 \bmod{d_i}}}
\sum_{\substack{
(s,t) \in \ZZp^2\cap\c{R}\cap \Lambda(\fa_1\fe_1,\dots,\fa_N\fe_N) \\
(s,t)\equiv (\sigma,\tau) \bmod{d}
}}
1.
$$
If  $F_i(x,y)=cy$ for some $i$,  then
the condition 
$b_i s\equiv \theta_i t \bmod{\fa_i \fe_i}$
should be  replaced by  $t \equiv 0 \bmod{\fa_i \fe_i}$. 

Recall the definition 
\eqref{eq:brain} of  $\bar\rho_i$ and  let 
\begin{equation*}
h(d)=\prod_{p\mid d}
\l(1+\frac{1}{p}\r)^{-1}.
\end{equation*}
The number of possible  $(\sigma,\tau)\bmod{d}$ is  equal to
$\phi(d) \bar\rho_1(d_1)\cdots \bar\rho_N(d_N)$.
In $S(\b{d})$ the inner sum can be estimated using  the geometry of numbers. 
Calling upon Lemma \ref{lem:5.3}, we deduce that 
\[
S(\b{d})=\frac{V}{\zeta(2)}
\frac{h(q_G)}{q_G}
\prod_{i=1}^N
\frac{\bar\rho_i(d_i)h(d_i)
h(\n_i\fa_i)}{d_i \n_i \fe_i \n_i \fa_i}
+O_\delta\l( \frac{K_{\c{R}}z^{\gamma+\frac{\delta}{2}}}{\lambda_G}
\r)
,\]
for any $\delta>0$. We emphasise that the implied constant in this estimate does  not depend on any of $\c{R},d_i,\fa_i$ or $\fe_i$. 
Since $|\lambda_d^+|\leq 1$
and $\tau_{K_i}(\fa_i) \ll_\delta( \n_i\fa_i)^{\frac{\delta}{2N}}$, we find that 
the  overall contribution to $\c{U}_\gamma (\fa_1,\dots,\fa_N)$ from the error term is 
$O_\delta(K_\c{R}z^{2\gamma+\delta}/\lambda_G)$, 
on summing trivially over $\fe_1,\dots,\fe_N$ and $d_1,\dots,d_N$.
This is plainly satisfactory for Lemma \ref{lem:salami}.

Turning to the contribution from the main term, we set
\[
g(d)=
\mathbf{1}\Big(d,q_GW\prod_{i=1}^N \n_i \fa_i\Big) 
\frac{h(d)}{d}
\sum_{\substack{
d_1
\cdots d_N=d
\\
\gcd(d_i,d_j)=1
}}
\prod_{i=1}^N
\bar\rho_i(d_i)
,\]
where $\mathbf{1}(d,a)=1$ if 
$\gcd(d,a)=1$ and  
$\mathbf{1}(d,a)=0$, otherwise.
Since $h(d)\leq 1$
and $\varphi_i(\fa_i)\leq \n_i\fa_i$, 
the main term contributes
\begin{align*}
&\ll 
\frac{V}{q_G}
\prod_{i=1}^N
\frac{1}{\n_i \fa_i}
\Big|\sum_{\substack{
1\leq d \leq 2z^\gamma
\\
d \mid P(z^{\gamma})
}}
\lambda_d^+ g(d)\Big|.
\end{align*}
We may clearly  assume without loss of generality that $w< 2z^{\max\{\gamma, \frac{\eta}{2}\}}$. 
For any prime $p\nmid W$, let 
$$
c_p=
1-\frac{h(p)}{p}\sum_{i=1}^N
\bar\rho_i(p)=1-\frac{\bar\rho_1(p)+\dots+\bar\rho_N(p)}{p+1}.
$$
Recalling that $\deg(F_i)=d_i$ for $1\leq i\leq N$, we see that 
$$
c_p\geq 1-\frac{d_1+\dots+d_N}{p+1},
$$
for $p\nmid W$.
Next, for $y\geq 0$, define
\[
\Pi(y)=\prod_{\substack{p<y\\ p\nmid W}}c_p, \quad 
\Pi_1=
\prod_{\substack{p\geq 2z^\gamma \\ p| \n_1\fa_1\cdots \n_N\fa_N}}c_p,
\quad \Pi_2=
\prod_{\substack{p\geq 2z^\gamma \\ p| q_G }}c_p
.\]
By the fundamental lemma of sieve theory  \cite[Lemma 6.3]{iwko}, 
we find that  
\begin{align*}
\sum_{\substack{
1\leq d \leq 2z^\gamma
\\
d \mid P(z^\gamma)
}}
\lambda^+_{d}g(d)
&\ll
\prod_{p<2z^\gamma}(1-g(p))
=
\Pi(2z^\gamma)
\Pi_1 \Pi_2
h_W^*\l(q_G\r)
\prod_{i=1}^N h^\dagger(\n_i\fa_i),
\end{align*}
in the notation of \eqref{eq:faggot} and
\eqref{eq:faggot'}.
It is clear that $\Pi_i\leq 1$ for $i=1,2$.
Noting that $\Pi(y)\ll (\log y)^{-N}$,
this therefore concludes the proof of the lemma.
\end{proof}

\subsection*{Cases II and III}

We now estimate
$E^{(II)}(\c{R})$ and 
$E^{(III)}(\c{R})$. 
For any  
$(s,t)\in \ZZp^2\cap \c{R}$,  we take the  trivial bound 
$$
\prod_{i=1}^N
f_{i,q_GW}((b_is-\theta_i t))
\ll 
\prod_{i=1}^N (\n_i(b_is-\theta_i t))^{\ve_i}
\ll 
\|\c{R}\|_\infty^{\sum_i d_i\ve_i}
\le
 K_\c{R}^{\sum_i d_i\ve_i}
.$$
In Case II
the relevant extra  constraints are
$P^-(b_{s,t}) < z^{\frac{\eta}{2}}$ and 
 $a_{s,t}\leq z^{1-\eta}$.
Let
$p=P^-(b_{s,t})< z^\frac{\eta}{2}$ and let $\nu$ be such that  
$p^\nu\| F(s,t)$. 
We must have  $p^\nu\geq z^{\eta}$, since otherwise
$z<p^\nu
a_{s,t}
<
z^{\eta}
z^{1-\eta}=z$
,
which is a contradiction.
For each prime $p\nmid q_GW$ with $p< z^\frac{\eta}{2}$, we  define
\[
l_p=\min\{
l \in \Z_{\geq 0}:p^l\geq z^{\eta}
\}
.\]
Clearly 
$
z^{\eta}
\leq p^{l_p}
<
z^{\frac{\eta}{2} l_p},$ 
whence $l_p\geq 2$ for every prime $p$.
Therefore 
\begin{equation}
\label{eq:lp}
\sum_{\substack{p<z^\frac{\eta}{2} \\ p\nmid q_GW}}
\frac{1}{p^{l_p}}
\leq
\sum_{\substack{p<z^\frac{\eta}{2} \\ p\nmid q_G W}}
\min\left\{
\frac{1}{z^{\eta}},
\frac{1}{p^{2}}
\right\}
\leq
\sum_{p\leq z^{\frac{\eta}{2}}} \frac{1}{z^{\eta}}  \leq  z^{-\frac{\eta}{2}}
.\end{equation}
The number of  elements $(s,t)$  satisfying the constraints of Case II is at most
\[
\sum_{i=1}^N
\sum_{\substack{p< z^\frac{\eta}{2} \\p\nmid  q_G W}}
\sum_{\substack{(s,t)\in \ZZp^2\cap \c{R}  \cap G 
\\
p^{l_p}\mid F_i(s,t)}}
1\ll 
\sum_{i=1}^N
\sum_{\substack{p< z^\frac{\eta}{2} \\p\nmid  q_G W}}
\bar\rho_i(p^{l_p})
\l(
\frac{h(q_G)}{q_G}
\frac{V}{p^{l_p}}
+\tau(q_G)
\frac{K_\c{R}}{\lambda_G}
\r).
\]
Here we have split the inner sum 
into $\bar\rho_i(p^{l_p})$
different lattices of the form
$\{(s,t) \in G:  s\equiv x t \bmod{p^{l_p}}\}$,
where $x$ ranges over solutions of the congruence $F_i(x,1)\equiv 0 \bmod{p^{l_p}}$, before applying 
Lemma \ref{lem:5.3}
with $q=1$.
Hensel's lemma implies that $\bar\rho_i(p^l)=\bar\rho_i(p)\leq d_i$ 
for each  prime $p\nmid W$ and $l \in \N$. 
Let $\delta>0$ be arbitrary.
Taking  $h(q_G)\leq 1$ and  $\tau(q_G)\ll_\delta q_G^{\frac{\delta}{8 c_2}}\leq V^{\frac{\delta}{8}}\le K_\c{R}^{ \frac{\delta}{2}}$, this therefore reveals that 
\begin{align*}
\sum_{i=1}^N
\sum_{\substack{p< z^\frac{\eta}{2} \\p\nmid q_GW}}
\sum_{\substack{(s,t)\in \ZZp^2\cap \c{R}  \cap G
\\
p^{l_p}\mid F_i(s,t)}}
1
\ll_\delta
\frac{V}{q_G}
\sum_{\substack{p< z^\frac{\eta}{2} \\p\nmid q_GW}}
\frac{1}{p^{l_p}}  
+\frac{K_{\c{R}}^{1+\frac{\delta}{2}}z^{\frac{\eta}{2}}}{\lambda_G}
&\ll_\delta
\frac{V}{q_G}
\frac{1}{z^{\frac{\eta}{2}}}
+\frac{K_{\c{R}}^{1+\frac{\delta}{2}}z^{\frac{\eta}{2}}}{\lambda_G},
\end{align*}
by \eqref{eq:lp}.
Recalling \eqref{eq:z}, we see that
$
z^{-\frac{\eta}{2}}
= V^{-\beta}
$,
with 
$\beta=\frac{\eta \omega}{2}$.
Likewise, 
$z^\frac{\eta}{2}=V^{\frac{\eta \omega}{2} }
\leq
K_{\c{R}}^{2 \eta\omega   }
$.
Noting that 
$$
K_\c{R}^{\sum_i d_i \ve_i}
\leq 
V^{\frac{1}{c_1} \sum_i d_i \ve_i}
,$$
we have therefore proved that for every $\delta>0$ we have the bound 
\begin{equation}
\label{eq:ee22}
E^{(II)}(\c{R})
\ll_{\delta}
\frac{V^{1-\frac{\eta \omega}{2}+\frac{1}{c_1} \sum_i d_i \ve_i}}{q_G} 
+ \frac{K_\c{R}^{1+\frac{\delta}{2}+2 \eta \omega  +\sum_i d_i \ve_i}}{\lambda_G}
.\end{equation}

We now turn to the contribution from Case III, for which the defining constraints are 
$
P^-(b_{s,t})\leq \log z \log \log z$
and $z^{1-\eta}<a_{s,t}\leq z$.
We assume that $w>\max_{i\neq j}|\res(F_i,F_j)|$ in the definition \eqref{eq:WW} of $W$. For any $(s,t)\in \ZZp^2$ it follows that the integer factors of 
$F_i(s,t)_W$ are necessarily coprime to the factors of $F_j(s,t)_W$ for all $i\neq j$.
Hence the number of elements $(s,t)$ 
satisfying the constraints of Case III is at most
\begin{align*}
\sum_{\substack{(s,t)\in \ZZp^2\cap \c{R} \cap G 
\\
P^-(b_{s,t})\leq  \log z \log \log z
\\
z^{1-\eta}<a_{s,t}\leq z}}
1
&\leq
\sum_{\substack{z^{1-\eta}<a\leq z
\\
\gcd(a,q_GW)=1
\\
P^+(a) \leq 
\log z
\log \log z
}}
\sum_{\substack{(s,t)\in \ZZp^2\cap \c{R} \cap G\\
a\mid F(s,t)}}1\\
&\leq
\sum_{\substack{z^{1-\eta}<a_1\cdots a_N\leq z
\\
\gcd(a_i,q_G Wa_j)=1
\\
P^+(a_i) \leq 
\log z
\log \log z
}}
\sum_{\substack{(s,t)\in \ZZp^2\cap \c{R} \cap G\\
a_i\mid F_i(s,t)}}1.
\end{align*}
As before, the final sum can be split into at most $\prod_{i=1}^N \bar\rho_i(a_i)=O_\delta(z^\delta)$
lattices, for any $\delta>0$, 
each of determinant
$q_G\prod_{i=1}^N a_i$.
Thus the right hand side is
\begin{align*}
&\ll_\delta z^{\delta} 
\sum_{\substack{z^{1-\eta}<a_1
\cdots a_N\leq z
\\
P^+(a_1\cdots a_N) \leq
\log z
\log \log z
}}
\l(
\frac{V}{q_Ga_1 
\cdots 
a_N}
+
\frac{K_\c{R}}{\lambda_G} 
\r)\\
&\ll_\delta z^{2\delta} 
\sum_{\substack{z^{1-\eta}<a\leq z
\\
P^+(a) \leq \log z \log \log z
}}
\l(
\frac{V}{q_Ga}
+
\frac{K_\c{R}}{\lambda_G} 
\r),  
\end{align*}
whence 
\cite[Lemma~1]{shiu} yields  
\begin{align*}
\sum_{\substack{(s,t)\in \ZZp^2\cap \c{R} \cap G
\\
P^-(b_{s,t})\leq \log z\log \log z\\
z^{1-\eta}<a_{s,t}\leq z}}
\hspace{-0,3cm}1 \ 
&\ll_{\delta}
z^{3\delta}
\l(
\frac{V}{q_Gz^{1-\eta}}+
\frac{K_\c{R}}{\lambda_G} 
\r).
\end{align*}
Hence 
we have shown
that for every $\delta>0$
one has 
\begin{equation}
\label{eq:ee33}
E^{(III)}(\c{R})
\ll_{\delta}
\frac{V^{1-(1-\eta)\omega +3\delta \omega +\frac{1}{c_1} \sum_i d_i \ve_i}}{q_G} 
+ \frac{K_\c{R}^{1+3 \delta \omega+\sum_i d_i \ve_i}}{\lambda_G}
.\end{equation}

\subsection*{Case IV}

The final case to consider is characterised by the constraints
$$
\log z 
\log \log z
<
P^-(b_{s,t})<z^\frac{\eta}{2} \quad \text{ and  } \quad
z^{1-\eta}<a_{s,t}\leq z.
$$
Arguing as in \eqref{eq:liver} in the treatment of Case I, we find that 
\begin{align*}
E^{(IV)}(\c{R})
&\ll
\sum_{\substack{
\fa_i \in \cP_i
\\
\gcd(\n_i \fa_i,q_GW\n_j\fa_j)=1
\\
z^{1-\eta}<\prod_{i=1}^N
\n_i\fa_{i}
\leq z
}}
\c{U}^{\dagger}(\fa_1,\dots,\fa_N)
\prod_{i=1}^N f_i(\fa_i),
\end{align*}
where $\c{U}^\dagger(\fa_1,\dots,\fa_N)$ is as in  the definition 
of $\c{U}(\fa_1,\dots,\fa_N)$ after \eqref{eq:liver}, but with 
the condition 
$P^-(F(s,t)_{q_GW}/\prod_{i=1}^N
\n_i \fa_{i})\geq z^\frac{\eta}{2}$ replaced by 
$$
\log z
\log \log z
< P^-\l(
\frac{F(s,t)_{q_GW}}{\prod_{i=1}^N
\n_i \fa_{i}}\r)< z^\frac{\eta}{2}.
$$
In particular, in view of the coprimality of  $a_{s,t}$ and $b_{s,t}$, we see that 
\[
P^+\left(\prod_{i=1}^N
\n_i\fa_{i}\right) 
<P^-\l(
\frac{F(s,t)_{q_GW}}{\prod_{i=1}^N
\n_i \fa_{i}}
\r)
.\]
We will find it convenient to enlarge the sum slightly, replacing the condition $\fa_i\in \cP_i$ by the condition that each $\fa_i$ belongs to the multiplicative span of degree $1$ prime ideals in $\fo_i$.

We may assume without loss of generality that $\frac{2}{\eta}\in \Z_{\geq 2}.$ Thus 
\[
(
\log z
\log \log z,
z^\frac{\eta}{2}
)
\subset
\bigcup_{k=\frac{2}{\eta}}^{k_0+1}
(z^{\frac{1}{k+1}},z^{\frac{1}{k}}]
,
\]
where $k_0=[\log z/\log(
\log z
\log \log z)]$
satisfies
$k_0\leq \log z/ \log \log z$.
Notice that for any integer $b$ satisfying
$\log b \ll \log z$ and $z^{\frac{1}{k+1}}<P^-(b)\leq z^{\frac{1}{k}}$ 
we must have 
$\Omega(b) \ll  k$.
Applying this with  $b=F(s,t)_{q_GW}/\prod_{i=1}^N
\n_i\fa_{i}$,
for any $(s,t)\in \ZZp^2\cap \c{R}$,
we deduce that 
\begin{align*}
\prod_{i=1}^N
A_i^{\Omega_i\big(\frac{(b_is-\theta_i t)_{q_GW}}{\fa_i}\big)}
&\leq
 \max_{1\leq i\leq N}A_i^{\Omega(b)}
\leq
A^k,
\end{align*}
for a suitable constant 
$A\gg \max_{1\leq i\leq N}A_i$, where  
 $A_i$ is the constant appearing in the definition of 
$\c{M}_{K_i}=\c{M}_{K_i}(A_i,B_i,\ve_i)$. 
Hence
$$
E^{(IV)}(\c{R})
\leq
\sum_{k=\frac{2}{\eta}}^{{k_0+1}}
A^k
\sum_{\substack{
\fa_i\in \cP_{K_i}^\circ
\\
\gcd(\n_i \fa_i,q_GW\n_j\fa_j)=1
\\
z^{1-\eta}<\prod_{i=1}^N
\n_i\fa_{i}
\leq z\\
P^+(\prod_{i=1}^N
\n_i\fa_{i})
< z^{\frac{1}{k}}
}}
\c{U}_{\frac{1}{k+1}}(\fa_1,\dots,\fa_N)
\prod_{i=1}^N
f_i(\fa_i) ,
$$
in the notation of  Lemma \ref{lem:salami}, which we now use  to estimate
$\c{U}_{\frac{1}{k+1}}(\fa_1,\dots,\fa_N)$.

The overall contribution from the second  term is 
$$
\ll_\delta
\frac{K_{\c{R}}}{\lambda_G}
\sum_{k=\frac{2}{\eta}}^{{k_0+1}}
A^k z^{1+\frac{2}{k+1}+2\delta}
\leq 
\frac{K_{\c{R}}z^{\frac{5}{3}+2\delta}}{\lambda_G}
\sum_{k=\frac{2}{\eta}}^{{k_0+1}}
A^k 
\ll_{\delta}
\frac{K_{\c{R}}z^{\frac{5}{3}+3\delta}}{\lambda_G}
\leq \frac{K_\c{R}^{1+ 4\omega(\frac{5}{3}+3\delta)}}{\lambda_G}
,$$
since   $2\leq 2/\eta \leq k \leq k_0\ll \log z/\log\log z$ and $z=V^\omega \leq  K_\c{R}^{4\omega}$. 

It remains to consider the   contribution from the main term in Lemma \ref{lem:salami}. This is 
\begin{equation}\label{eq:spam}
\ll \frac{V}{(\log z)^N}
\frac{
h_W^*(q_G)}{q_G}
\sum_{k=\frac{2}{\eta}}^{{k_0+1}}
A^k 
(k+1)^{N}
E(z^{1-\eta},z^{\frac{1}{k}}),
\end{equation}
where 
$$
E(S,T)=
\sum_{\substack{
\fa_i\in \cP_{K_i}^\circ
\\
\gcd(\n_i \fa_i,W\n_j\fa_j)=1
\\
\prod_{i=1}^N
\n_i\fa_{i}>S\\
P^+(\prod_{i=1}^N\n_i\fa_{i})
< T 
}}
\prod_{i=1}^N
\frac{f_i(\fa_i)h_i^\dagger(\fa_i)}{\n_i\fa_i}
.$$
Note that we have dropped the condition $\gcd(\prod_{i=1}^N \n_i\fa_i,q_G)=1$.

Let us define the multiplicative function
$u:\N\to \R_{\geq 0}$
via
\begin{equation}
\label{eq:deafinition}
u(a)=\sum_{\substack{
\fa_i\in \cP_{K_i}^\circ
\\\gcd(\n_i\fa_i,\n_j\fa_j)=1\\ \prod_{i=1}^N \n_i \fa_i=a}}
\prod_{i=1}^N
f_i(\fa_i)h_i^\dagger(\fa_i)
.\end{equation}
Note that 
\begin{equation}
\label{eq:c1}
u(p^k)=\sum_{i=1}^N
\sum_{\substack{\fp_i \subset \fo_i \text{ prime}  \\ \n_i \fp_i=p}}
f_i(\fp_i^k)
h_i^\dagger(\fp_i^k) \leq C^k
,\end{equation}
 for an appropriate constant $C>1$  depending on $A_i,d_i$ and $N$.
We may therefore write
\[
E(S,T)=
\sum_{\substack{
\gcd(a,W)=1
\\
a>S\\
P^+(a)
< T 
}}
\frac{u(a)}{a}.
\] 
Drawing inspiration from the proof of~\cite[Lemma 2]{NT},
we shall find an  upper bound for $E(S,T)$ in terms of partial sums involving $u(a)$. This is the object of the following result.

\begin{lemma}\label{lem:foul mouth}
Assume that $T>\eul^{\frac{C}{10}}$ and 
let $\kappa\in (\frac{1}{10}, C^{-1}\log T)$.
Then 
$$
E(S,T)
\ll_\kappa
\eul^{-\kappa \frac{\log S}{\log T}} 
\sum_{\substack{
\gcd(b,W)=1
\\
 b\leq T 
}}
\frac{u(b)}{b}
.
$$
\end{lemma}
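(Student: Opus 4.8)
The plan is to exploit a Rankin-type (pseudo-character) trick, exactly in the spirit of \cite[Lemma~2]{NT}. The quantity $E(S,T)$ counts, with multiplicative weight $u(a)/a$, those integers $a>S$ that are $T$-smooth and coprime to $W$. The point of the condition $a>S$ is that it can be detected, up to a constant, by inserting a factor $\eul^{\kappa \frac{\log a}{\log T}}\geq \eul^{\kappa\frac{\log S}{\log T}}$, i.e.
\[
E(S,T)
\leq
\eul^{-\kappa\frac{\log S}{\log T}}
\sum_{\substack{\gcd(a,W)=1\\ P^+(a)<T}}
\frac{u(a)}{a}
\,a^{\kappa/\log T}
=
\eul^{-\kappa\frac{\log S}{\log T}}
\prod_{\substack{w<p<T}}
\Bigl(\sum_{k\geq 0}\frac{u(p^k)}{p^k}p^{k\kappa/\log T}\Bigr).
\]
So the whole business reduces to bounding the Euler product on the right by a constant (depending on $\kappa$) times $\sum_{\gcd(b,W)=1,\ b\leq T} u(b)/b$.

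First I would rewrite the target sum as an Euler product over $w<p\leq T$ as well: since $u$ is multiplicative and supported on integers coprime to $W$, and using $u(p^k)\leq C^k$ from \eqref{eq:c1}, truncating each local factor at the $T$-smooth level is harmless up to a constant, so $\sum_{\gcd(b,W)=1,\ b\leq T}u(b)/b \gg_\kappa \prod_{w<p\leq T}(1+u(p)/p+O(1/p^2))$. Comparing the two Euler products prime by prime, the ratio of the $p$-th factors is
\[
\frac{\sum_{k\geq 0}u(p^k)p^{-k}p^{k\kappa/\log T}}{\sum_{k\geq 0}u(p^k)p^{-k}}
= 1+ \frac{u(p)}{p}\bigl(p^{\kappa/\log T}-1\bigr)+O\!\Bigl(\frac{1}{p^{2}}\Bigr),
\]
valid because $p^{\kappa/\log T}=\eul^{\kappa\log p/\log T}\ll_\kappa 1$ on the range $p\leq T$ (here $\kappa<C^{-1}\log T$ and $u(p^k)\leq C^k$ keep the $k\geq 2$ tail geometric and $O(p^{-2})$). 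Taking logarithms, $\log\bigl(\text{ratio}\bigr)\ll \frac{u(p)}{p}(p^{\kappa/\log T}-1)+O(p^{-2})$, and since $0\leq u(p)\ll 1$ and $0\leq p^{\kappa/\log T}-1\ll_\kappa \frac{\log p}{\log T}$ on $p\leq T$, we get
\[
\sum_{w<p\leq T}\log(\text{ratio})
\ll_\kappa \frac{1}{\log T}\sum_{p\leq T}\frac{\log p}{p}+\sum_p\frac{1}{p^2}
\ll_\kappa 1,
\]
by Mertens. Hence the two Euler products are comparable up to a constant depending only on $\kappa$ (and the fixed data $C$, i.e.\ $A_i,d_i,N,W$), which is exactly the claimed bound. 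The hypotheses $T>\eul^{C/10}$ and $\kappa>\tfrac{1}{10}$ are just there to make the geometric-tail and Mertens estimates uniform; one should keep a careful eye on the fact that the implied constant must not depend on $S$ or $T$.

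The main obstacle is bookkeeping rather than conceptual: one must make the passage between ``$a>S$, $T$-smooth'' sums and the infinite Euler products completely clean, checking that dropping the smoothness constraint on the target side and truncating it on the Rankin side each cost only a bounded factor, and verifying that the local ratio expansion really is $1+O(u(p)/p\cdot\log p/\log T)+O(p^{-2})$ uniformly — in particular that the $k\ge 2$ terms of $\sum_k u(p^k)p^{-k}p^{k\kappa/\log T}$ stay $O(p^{-2})$ given $p^{\kappa/\log T}\le p^{1/C}$ and $u(p^k)\le C^k$, which forces $C p^{\kappa/\log T}/p = C p^{\kappa/\log T -1}\le Cp^{1/C-1}$ to be small for $p$ large, handled by absorbing finitely many small primes into the constant. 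Once these uniformities are nailed down, Mertens' theorem closes the argument.
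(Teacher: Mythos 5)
Your first half is sound and is essentially the paper's own argument in different clothing: you detect $a>S$ by Rankin's trick with $\beta=\kappa/\log T$, factor the twisted smooth sum as an Euler product, and compare it local factor by local factor with the untwisted one via Mertens. The paper organises the same comparison through the convolution identity $a^\beta=\sum_{c\mid a}\psi_\beta(c)$ rather than a literal Euler-product factorisation, but the content is identical, and your uniformity checks (the $k\geq 2$ tail being $O_\kappa(p^{-2})$ after enlarging $w$, the use of $\kappa<C^{-1}\log T$) are the right ones. Up to this point you have proved
$$
E(S,T)\ll_\kappa \eul^{-\kappa\frac{\log S}{\log T}}\sum_{\substack{\gcd(d,W)=1\\ P^{+}(d)<T}}\frac{u(d)}{d},
$$
which is exactly the intermediate estimate in the paper's proof.

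The gap is the step you dismiss as ``truncating each local factor at the $T$-smooth level is harmless up to a constant''. What you actually need is
$$
\sum_{\substack{\gcd(d,W)=1\\ P^{+}(d)<T}}\frac{u(d)}{d}\ll \sum_{\substack{\gcd(b,W)=1\\ b\leq T}}\frac{u(b)}{b},
$$
and this is not a statement about local factors at all: the full Euler product over $p<T$ equals the sum over \emph{all} $T$-smooth integers, including the $T$-smooth integers larger than $T$, and it is precisely their contribution that must be shown not to dominate. The trivial inequality goes the wrong way (the smooth sum is $\geq$ the truncated sum), so your ``$\gg_\kappa$'' claim is exactly the non-trivial direction, asserted without proof. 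The paper spends the entire second half of its proof on it: one first splits $d=d_-d_+$ with $P^{+}(d_-)\leq T^{\xi}$ and $P^{-}(d_+)>T^{\xi}$, bounds the rough part by $\prod_{T^{\xi}<p<T}(1+\tfrac1p)^{2C}\ll_{\xi}1$, and then applies the Rankin bound a \emph{second} time, with $\kappa=2$ and the pair $(T,T^{\xi})$ in place of $(S,T)$, to show that the $T^{\xi}$-smooth integers exceeding $T$ contribute at most $\eul^{-2/\xi}$ times the full $T^{\xi}$-smooth sum; choosing $\xi$ small makes this factor at most $\tfrac12$, which can then be absorbed. Note that a single application of Rankin at level $T$ does not suffice here, because the implied constant in your bound grows with $\kappa$, so you cannot simply take $\kappa$ large to beat it; the auxiliary smoothness parameter $T^{\xi}$ is what makes the saving $\eul^{-2/\xi}$ available at fixed $\kappa=2$. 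Supply this second Rankin argument and your proof is complete.
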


Taking this result on faith for the moment, we return to~\eqref{eq:spam}
and apply it with $\kappa$ satisfying 
$\eul^{\kappa (1-\eta)}>2A$. 
(Note that the implied constant in Lemma \ref{lem:foul mouth} depends on $\kappa$ and so the choice $\kappa=(\log 2A)/(1-\eta)+1$ is acceptable.)
This produces the overall contribution
\begin{align*}
&\ll
\frac{V}{(\log z)^N}\frac{h_W^*(q_G)}{q_G}
\sum_{k=\frac{2}{\eta}}^{{k_0+1}}
\frac{A^k (k+1)^{N}}{\eul^{\kappa k(1-\eta)}}   
\sum_{\substack{
\gcd(b,W)=1
\\
 b\leq z^{\frac{1}{k}}
}}
\frac{u(b)}{b}
\\
&\ll
\frac{V}{(\log z)^N}
\frac{h_W^*(q_G)}{q_G}
\sum_{k=\frac{2}{\eta}}^{{k_0+1}}
\frac{(k+1)^{N}}{2^k}
\sum_{\substack{
\gcd(b,W)=1
\\
 b\leq z
}}
\frac{u(b)}{b}
\\
&\ll
\frac{V}{(\log z)^N}
\frac{h_W^*(q_G)}{q_G}
\sum_{\substack{
\gcd(b,W)=1
\\
 b\leq z
}}
\frac{u(b)}{b}
.\end{align*}
Recalling~\eqref{eq:deafinition} and enlarging $w$ to enable the use of 
Lemma~\ref{lem:knut},
shows that the last quantity is  
$$
\ll
\frac{V}{(\log z)^N}
\frac{h_W^*(q_G)}{q_G}
\prod_{i=1}^N
\sum_{\substack{
\n_i\fa_i  \leq z\\
\fa_i\in \cP_{K_i}^\circ \text{ square-free}
\\
\gcd(\n_i \fa_i,W)=1
}}\frac{f_i(\fa_i)}{\n_i\fa_i},
$$
which shows that for every $\delta>0$ we have
\begin{equation}
\label{eq:ee44}
E^{(IV)}(\c{R})
\ll_{\delta}
\frac{V}{(\log z)^N}
\frac{h_W^*(q_G)}{q_G}
\prod_{i=1}^N
\sum_{\substack{
\n_i\fa  \leq z\\
\fa_i\in \cP_{K_i}^\circ \text{ square-free}
\\
\gcd(\n_i \fa,W)=1
}}\frac{f_i(\fa_i)}{\n_i\fa_i}
+ \frac{K_\c{R}^{1+ 4\omega(\frac{5}{3}+3\delta)}}{\lambda_G}
.\end{equation}

\begin{proof}[Proof of Lemma \ref{lem:foul mouth}]
Let $\beta=\frac{\kappa}{\log T}$.
Then 
\[
E(S,T)\leq 
S^{-\beta}
\sum_{\substack{
\gcd(a,W)=1
\\
P^+(a)
< T 
}}
\frac{u(a)}{a}
a^\beta
.\]
Define the multiplicative function $\psi_\beta$ via
$
a^\beta=\sum_{c\mid a}\psi_\beta(c),
$
for $a\in \N$.
We observe that  
$\psi_\beta(p^k)=p^{\beta k}-p^{\beta (k-1)}$, for any $k \in \N$,
whence $0<\psi_\beta(a)<a^\beta$ for all $a \in \N$.
We now have 
\[
E(S,T)\leq 
S^{-\beta}
\sum_{\substack{\gcd(c,W)=1\\ P^+(c)<T}}
\frac{\psi_\beta(c)}{c}
\sum_{\substack{
\gcd(d,W)=1
\\
P^+(d)
< T 
}}
\frac{u(cd)}{d}
.\]
Writing $d=j d'$, with $\gcd(d',c)=1$ and $j \mid c^\infty$,
shows that
\[
E(S,T)\leq S^{-\beta}
\sum_{\substack{
\gcd(d',W)=1
\\
P^+(d')
< T 
}}
\frac{u(d')}{d'}
\sum_{\substack{\gcd(c,d'W)=1\\ P^+(c)<T}}
\sum_{j \mid c^\infty}
\frac{\psi_\beta(c)u(c j)}{c j}
.\]
After possibly enlarging $w$, it follows from \eqref{eq:c1} that 
the sum over $c$ is
\begin{align*}
\leq 
\prod_{\substack { p < T\\p\nmid d'W}}
\l(1+
\sum_{\substack{k \geq 1 \\ j \geq 0}}
\frac{\psi_\beta(p^k)
u(p^{k+j})}{p^{k+j}}
\r)
&\leq 
\prod_{\substack {p < T\\p\nmid d'W}}
\l(1+
\sum_{\substack{k \geq 1 \\ j \geq 0}}
\frac{(p^{\beta k}-p^{\beta (k-1)})C^{k+j}}{p^{k+j}}
\r)\\
&\leq 
\exp\l(
O\l(
\sum_{\substack{p<T\\p\nmid d'W}}\frac{p^{\beta}-1}{p}
\r)
\r)
.
\end{align*}
Writing $p^\beta=\exp(\frac{\kappa \log p}{\log T})=1+O(\frac{\kappa \log p}{\log T}),$
this is found to be at most
\[
 \exp\l(O\l(\frac{\kappa}{\log T}\sum_{p \leq T}\frac{\log p}{p}
\r)\r)
\ll_\kappa 1.
\]
Our argument so far shows that 
\begin{equation}
\label{eq:27}
E(S,T)
\ll_\kappa	
\eul^{-\kappa\frac{\log S}{\log T}}   
\sum_{\substack{
\gcd(d,W)=1
\\
P^+(d)
< T 
}}
\frac{u(d)}{d}
.\end{equation}

Let 
 $\xi \in (0,1)$.  
Observe that each $d$ with $P^+(d)<T$ can be written
uniquely in the form $d=d_-d_+$ for coprime $d_-, d_+\in \N$ such that 
$P^+(d_-)\leq T^{\xi}$
and $P^-(d_+) > T^{\xi}$. We clearly have  $P^+(d_+)<T$.
Thus 
\[
\sum_{\substack{
\gcd(d,W)=1
\\
P^+(d)
< T 
}}
\frac{u(d)}{d}
\leq
\sum_{\substack{
\gcd(d_-,W)=1
\\
P^+(d_-)
\leq T^{\xi}
}}
\frac{u(d_-)}{d_-}
\sum_{\substack{
\gcd(d_+,W)=1
\\
P^-(d_+)
> T^{\xi} 
\\
P^+(d_+)
< T 
}}
\frac{u(d_+)}{d_+}.
\] 
By~\eqref{eq:c1}, the inner sum is at most
$\prod_{T^{\xi} < p < T } (1+\frac{1}{p})^{2C}
\ll_{C,\xi}1.$
Thus, once combined with ~\eqref{eq:27}, we deduce that
\[
E(S,T)
\ll_\kappa
\eul^{-\kappa\frac{\log S}{\log T}}  
\sum_{\substack{
\gcd(d_-,W)=1
\\
P^+(d_-)
< T^{\xi}
}}
\frac{u(d_-)}{d_-}.
\]
In order to complete the proof of the lemma, it  remains to show that the 
$$
\sum_{\substack{
\gcd(d_-,W)=1
\\
P^+(d_-)
\leq T^{\xi}
}}
\frac{u(d_-)}{d_-}
\ll
\sum_{\substack{
\gcd(d,W)=1
\\
d<T
}}
\frac{u(d)}{d}.
$$
This is trivial when  $T^{\xi}<2$. Suppose now that $T^{\xi}>2$. 
Taking $\kappa=2$ and 
$(T,T^{\xi})$
in place of $(S,T)$, it follows from~\eqref{eq:27} that 
$$
\sum_{\substack{
\gcd(d_-,W)=1
\\
d_->T\\
P^+(d_-)
< T^{\xi}
}}
\frac{u(d_-)}{d_-}\ll
\eul^{-\frac{2}{\xi}}
\sum_{\substack{
\gcd(d_-,W)=1
\\
P^+(d_-)
< T^{\xi}
}}
\frac{u(d_-)}{d_-}.
$$
Taking  $\xi$
 suitably small, we conclude that 
\begin{align*}
\sum_{\substack{
\gcd(d_-,W)=1
\\
P^+(d_-)
< T^{\xi}
}}
\frac{u(d_-)}{d_-}
&=
\sum_{\substack{
\gcd(d_-,W)=1
\\
P^+(d_-)
< T^{\xi}
\\
d_-\leq T
}}
\frac{u(d_-)}{d_-}
+
\sum_{\substack{
\gcd(d_-,W)=1
\\
P^+(d_-)
< T^{\xi}
\\
d_-> T
}}
\frac{u(d_-)}{d_-}
\\
&
\leq
\sum_{\substack{
\gcd(d_-,W)=1
\\
 d_-\leq T 
}}
\frac{u(d_-)}{d_-}
+\frac{1}{2}
\sum_{\substack{
\gcd(d_-,W)=1
\\
P^+(d_-)
< T^{\xi}
}}
\frac{u(d_-)}{d_-}
,\end{align*}
so that 
\[\sum_{\substack{
\gcd(d_-,W)=1
\\
P^+(d_-)
< T^{\xi}
}}
\frac{u(d_-)}{d_-}
\leq
2\sum_{\substack{
\gcd(d_-,W)=1
\\
 d_-\leq T 
}}
\frac{u(d_-)}{d_-},\]
as claimed.
\end{proof}

\begin{proof}[Proof of Theorem \ref{t:NT}]
Let us define 
\[
\eta=\frac{2}{3}
\quad \text{ and } \quad
\omega=\frac{(3+\delta)}{c_1}
\sum_{i=1}^N d_i \ve_i 
,\]
where $\delta>0$ is to be determined. 
Now let $\ve$ be any positive constant.
Taking $\delta$ sufficiently small
compared to $\epsilon$ 
we 
see that the exponent of $K_\c{R}$ in the second term of~\eqref{eq:ee11} is
\begin{align*} 
1+4\omega(1+\eta+2\delta+\hat \ve)
&\leq 
1+\ve+\frac{4}{c_1}\Big(\sum_{i=1}^N d_i \ve_i\Big) (5+3 \hat \ve )
,\end{align*}
where $\hat \ve=\max\{\ve_1,\dots,\ve_N\}$.
Thus $E^{(I)}(\c{R})$ makes a 
satisfactory contribution 
for Theorem~\ref{t:NT}.
Taking $\delta$ sufficiently small allows us to check 
that 
\[
\frac{\eta \omega}{2}>\frac{1}{c_1} \sum_{i=1}^N d_i \ve_i
\text{  and }
(1-\eta)\omega >3\delta \omega +\frac{1}{c_1} \sum_{i=1}^N d_i \ve_i,
\]
for our choice of $\eta$ and $\omega$. This therefore  
shows that the first term in the right hand side of~\eqref{eq:ee22} and~\eqref{eq:ee33}
is $$\ll \frac{V}{(\log V)^{N+1}q_G},$$
which, owing to
$h_W^*(q_G)\geq 1$ and   $E_{f_i}(V;W)\geq 1$,
is satisfactory for Theorem~\ref{t:NT}.
A straightforward calculation now shows that 
for sufficiently small $\delta$
the contribution of the second terms on the right of~\eqref{eq:ee22},~\eqref{eq:ee33} and~\eqref{eq:ee44}
is also satisfactory for Theorem~\ref{t:NT}.
\end{proof}

\end{document}